\theoremstyle{plain}
\newtheorem{thm}{\noindent\bf Theorem}
\newtheorem{cor}{\noindent\bf Corollary}
\newtheorem{lem}{\noindent\bf Lemma}
\newtheorem{prop}{\noindent\bf Proposition}
\theoremstyle{remark}
\newtheorem{rmk}{\noindent \bf \textit{Remark}}
\newtheorem{exam}{\noindent \bf Example}
\def\bo{\mathbf{1}}
\newcommand{\D}{\displaystyle}
\newcommand{\non}{\nonumber}
\newcommand{\Pm}{\mathbb{P}}
\newcommand{\Em}{\mathbb{E}}
\newcommand{\wa}{\mathcal{W}_{a}^{(p,q)}}
\newcommand{\wab}{\mathcal{W}_{(a,b)}^{(p,q)}}
\newcommand{\tdc}{\tau_{d}^{+}\wedge\tau_{c}^{-}}
\newcommand{\uab}{u_{(a,b)}^{(p+\lambda,q+\lambda)}}
\newcommand{\sxp}{\sigma_{e_{\lambda}}^{+}}
\newcommand{\sxm}{\sigma_{e_{\lambda}}^{-}}
\newcommand{\sxe}{\sigma_{e_{\lambda}}^{\{0\}}}
\begin{document}

\title{Occupation times of intervals until last passage times for spectrally negative L{\'e}vy processes}
\author{
Bo Li and Chunhao Cai \\
\small School of Mathematics and LPMC, Nankai University}
\date{}
\maketitle

\begin{abstract} 
In this paper, we derive the  Laplace transforms of occupation times of intervals until last passage times for spectrally negative L{\'e}vy processes.
Motivated by \cite{Baurdoux2009:lastexp:levy}, the last times before an independent exponential variable are investigated here. 
By applying the dual argument, explicit formulas are obtained in terms of analytical identities first introduced by Loeffen et al. \cite{Loeffen2014:occupationtime:levy}.
\end{abstract}

\textbf{Keywords:} 
Occupation times; 
spectrally negative L\'evy process; 
last passage times;
scale functions;

%
%

\section{Introduction}
In the risk theory, ruin occurs at the first time when the surplus process becomes negative.
For a surplus modelled by a spectrally negative Levy process (SNLP) which we shall denote by $X=(X_t)_{t\geq0}$, the features at the time of ruin have been throughly studied, see for example Kyprianou \cite{Kyprianou2014:book:levy}.
There are also situations, as studied by \cite{Gerber1990, NokChiu2005:passagetime:levy, Baurdoux2009:lastexp:levy, Albrecher2011:optimal:omega, Gerber2012:omega}, where the moment of ruin may not be the most important character of a risk process. 
For example, in the omega model considered in \cite{Albrecher2011:optimal:omega} and \cite{Gerber2012:omega}, there is a distinction between ruin (negative surplus) and bankruptcy (going out of business).
A portfolio is assumed to be one of many that belongs to a company.
When the reserve for the portfolio becomes negative but not too severe, other funds can be brought to support the negative surplus for a while with the hope that the portfolio will recover in the future. 
In such situation, perhaps a more interesting question is how and how long does the surplus stay negative before bankruptcy takes place. 
Therefore, a more realistic object to study might be the last passage time below $0$ before a fixed time $t$, i.e.
\begin{equation}
\sigma_{t}^{-} =\sup\{0\leq s< t, X_{s}<0\}.
\end{equation}
with the convention that $\sup\emptyset=0$. 
Motivated by \cite{Baurdoux2009:lastexp:levy}, we focus on the last times before $e_{\lambda}$, where $e_{\lambda}$ is an exponential variable with mean $\lambda^{-1}$ and independent of $X$. 
In the main results of this paper, Theorem \ref{main:s-}, we identify the following distribution 
\[
\Em_{x}\left[e^{-L(\sxm)};  X(\sxm)\in dy, 0<\sxm \leq \tdc\right],
\]
where $L(\cdot)$ is the occupation times up to time $t$ which was first considered by \cite{Loeffen2014:occupationtime:levy}:
\begin{equation}\label{def:localtime}
L(t):=\int_0^{t} \left(p \bo_{\{X_{s}\notin(a,b)\}}+ q \bo_{\{X_{s}\in(a,b)}\})\right) ds, 
\end{equation}
where $p,q\geq0$, $d>0>c$, $d\geq b\geq a\geq c$, $x\in(c,d)$ and where
\begin{equation}
\tau_{x}^{+}= \inf\{t\geq 0, X_t>x\}\quad\text{and} \quad \tau_{x}^{-}= \inf\{t\geq 0, X_t<x\}.
\end{equation}
Using similar techniques, we also find the Laplace transform of $L(\cdot)$ at  
\begin{equation}
\sxp=\sup\{0\leq s< e_{\lambda}, X_{s}>0\}\quad \text{and}\quad  
\sxe=\sup\{0\leq s< e_{\lambda}, X_{s}=0\},
\end{equation}
with the convention that $\sup\emptyset=0$. Using the convention that $e_{0}=\infty$ with probability $1$, $(\sigma_{e_{0}}^{+}, \sigma_{e_{0}}^{-}, \sigma_{e_{0}}^{\{0\}})$ 
reduces to the globe last passage times.

In the last few years, several papers have looked at the distribution of functionals involving occupation times of a stochastic process. 
\cite{Landriault2011:occupationtime:levy} computed the Laplace transform of occupation times of the negative half-line of an SNLP,
whereas \cite{Kyprianou2013:occupationtime:Rlevy} studied the same functionals of a refracted L\'evy process. 
On the other hand, \cite{Gerber1990} found the Laplace transform of the last passage time at a certain level for the classical risk process. 
Then \cite{NokChiu2005:passagetime:levy} and \cite{Baurdoux2009:lastexp:levy} extended the results of the last times for an SNLP.
In addition, \cite{Li2013:occupationtime:diffusion} focused on time-homogeneous diffusion process.
For the occupation time of intervals of an SNLP, as defined in \eqref{def:localtime} and initially appeared in \cite{Loeffen2014:occupationtime:levy},
\cite{Loeffen2014:occupationtime:levy} obtained the Laplace transform of the functionals at first passage times. 
Furthermore, \cite{Guerin2014:occupationmeasure:levy} investigated its associating resolvent measures. 
More recently, via adopting the Poisson approach and approximation, \cite{Zhou:last} studied similar problems about the last passage times where the occupation times
on the negative half-line are considered.

In this paper, a dual argument is applied, which is different from the approach adopted in  \cite{Zhou:last}. 
Observing that the last time is dual to the first time by time reversal at $e_{\lambda}$, 
we could avoid dealing with the non-Markov property  of the last times and also helps in distinguishing the events between creeping and jumping in an intuitive way.
Making use of results already known and the strong Markov property, we derive formulas  in terms of  the modified analytical identities as shown in Loeffen et al. \cite{Loeffen2014:occupationtime:levy}.

The rest of this paper is organized as follows. In section 2 we present some primary results related to a L\'evy process as well as several nice properties concerning the functional $L$. The main results are presented in Section 3. Then in Sections 4 our proofs are given.

\section{Premilinaries}

We now briefly review some important properties of an SNLP and the associated scale functions. The reader is referred to the books Bertoin \cite{Bertoin96:book} and Kyprianou \cite{Kyprianou2014:book:levy} for an introduction to the theory of L\'evy processes. Here we exclude the case that $X$ is the negative of a subordinator.

Let $X=\{X_t, t\geq0\}$ be a spectrally negative L\'evy process on $(\Omega, \mathcal{F}, \{\mathcal{F}_{t}\}_{t\geq0}, \Pm)$, namely a process with stationary independent increments and without positive jumps. 
The law of $X$ such that $X_{0}=x$ is denoted by $\Pm_{x}$ and the corresponding expectation by $\Em_{x}$.
For $\lambda\geq0$, define
\[
\Em[\exp(\lambda X_t)]=\exp(t\psi(\lambda)).
\]
Being absent of positive jumps,  
$\psi(\lambda)$ is well defined and known as the Laplace exponent of $X$. It is a continuous and convex function given by the L\'evy-Khintchine formula
\[
\psi(\lambda)=\frac{1}{2}\sigma^2\lambda^2+ \gamma \lambda+ \int_{(0,\infty)} \left(e^{-\lambda x}-1+ \lambda x \bo_{\{x\in(0,1]\}}\right)\Pi(dx),
\]
where $\Pi(dx)$  is called L\'evy measure satisfying $\int_{(0, \infty)}(1\wedge x^2)\Pi(dx)<\infty$. The right continuous inverse of $\psi(\cdot)$ is denoted by 
\[
\phi(s)=\sup\{t\geq 0: \psi(t)=s\}.
\]

In the analysis of an SNLP,  the fluctuation theory  is closely connected to the two-sided exit problem as well as the resolvent measures.
The so-called scale functions $(W^{(q)}, Z^{(q)})$ play a vital role in the exiting formulas.
For $q\geq 0$, $W^{(q)}$ is defined as a continuous and increasing function satisfying
\begin{equation}
\int_0^{\infty} e^{-\theta y} W^{(q)}(y) dy=\frac{1}{\psi(\theta)-q}\quad\text{for $\theta>\phi(q)$},
\end{equation}
and $W^{(q)}(x)=0$ for all $x<0$. Define also
\begin{equation}
Z^{(q)}(x)=1+ q \int_0^{x}W^{(q)}(y) dy, \quad x\in \mathbb{R}.
\end{equation}
We write $W(x)=W^{(0)}(x)$ and $Z(x)=Z^{(0)}(x)=1$ when $q=0$. 


For $c\geq0$, the process $\{\exp(c X_t- \psi(c) t), t\geq0\}$ is a martingale under $\Pm$. We can introduce the change of measure which is another useful tool in solving existing problems,
\[
\left.\frac{d \Pm^{(c)}}{d \Pm} \right|_{\mathcal{F}_t}= e^{c X_t-\psi(c) t} \quad\text{for every $t>0$}.
\]
$X$ is still  an SNLP under $\Pm^{(c)}$. Its Laplace exponent and associatied scale functions under $\Pm^{(c)}$ are marked with a subscript $c$. A straightforward calculation shows that:
\[
\psi_c(s)= \psi(c+ s)- \psi(c)\quad\text{and}\quad
\phi_{c}(s)=\phi(\psi(c)+s)- c, \quad \forall s\geq0,
\]
and for $c\geq0$, $q+\psi(c)\geq0$, 
\[
W_c^{(q)}(x)= e^{-c x} W^{(q+\psi(c))}(x),\quad Z_{c}^{(q)}(x)= 1+ q \int_0^{x} W_{c}^{(q)}(y) dy.
\]
For their limiting behaviours, we have, for $q\geq0$,
\begin{equation}
e^{-\phi(q) x} W^{(q)}(x)\to\phi'(q), \quad \frac{Z^{(q)}(x)}{W^{(q)}(x)}\to \frac{q}{\phi(q)}\quad\text{as $x\to\infty$}. \label{scale:app}
\end{equation}

Applying the fluctuation theory and a change of measure, the following results concerning an SNLP can be found from \cite{Kyprianou2014:book:levy} 

\begin{prop}\label{prop:levy} For $x\geq0$, $q\geq0$, regarding one-sided passage times we have
\[
\Em\left[e^{-q \tau_{x}^{+}}; \tau_{x}^{+}<\infty\right]= e^{-\phi(q) x}, \quad \Em_{x}\left[e^{-q \tau_{0}^{-}}; \tau_{0}^{-}<\infty\right]= Z^{(q)}(x)- \frac{q}{\phi(q)} W^{(q)}(x).
\]
For the two-sided passage times, we have for $0\leq x\leq c$, $q,u,v\geq0$ with $p=u-\psi(v)$
\begin{align*}
\Em_{x}\left[e^{-q \tau_{c}^{+}}; \tau_{c}^{+}\leq \tau_{0}^{-}\right]=&\ \frac{W^{(q)}(x)}{W^{(q)}(c)},\\
\Em_{x}\left[e^{-u \tau_{0}^{-}+ vX(\tau_{0}^{-})}; \tau_{0}^{-}\leq \tau_c^{+}\right]=&\ e^{v x}\left(Z_{v}^{(p)}(x)- W_{v}^{(p)}(x)\frac{Z_{v}^{(p)}(c)}{W_{v}^{(p)}(c)}\right)\\
\Em_{x}\left(e^{- q \tau_{0}^{-}}; X(\tau_{0}^{-})=0\right)=&\ \frac{\sigma^{2}}{2}\left(W^{\prime(q)}(x)-\phi(q) W^{(q)}(x)\right).
\end{align*}
The potential measure of $X$ killed at exiting $[0,\infty)$ is given by
\[
R^{(q)}(x,dy):=\int_0^{\infty} e^{-qt}\Pm_{x}(X_t\in dy, t\leq \tau_{0}^{-})
= \left(e^{-\phi(q) y} W^{(q)}(x)- W^{(q)}(x-y)\right) dy.
\]
Let $\tau^{\{x\}}=\inf\{t>0, X(t)=x\}$ be the first hitting time of level $x$, then for $x\in\mathbb{R}$ 
\[
\Em\left[e^{-q \tau^{\{x\}}}; \tau^{\{x\}}<\infty\right]= e^{-\phi(q) x}- \frac{1}{\phi'(q)}W^{(q)}(-x).
\]
\end{prop}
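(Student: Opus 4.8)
The plan is to assemble these identities from the standard fluctuation theory of spectrally negative L\'evy processes, for which the principal tools are the exponential martingale, the characterizing property of the scale function $W^{(q)}$, and the Esscher-type change of measure $\Pm^{(c)}$ introduced above. First I would treat the upward passage identity $\Em[e^{-q\tau_x^+};\tau_x^+<\infty]=e^{-\phi(q)x}$: since $X$ has no positive jumps it creeps upward, so $X(\tau_x^+)=x$ on $\{\tau_x^+<\infty\}$, and applying optional stopping to the martingale $\{e^{\phi(q)X_t-qt}\}$ (using $\psi(\phi(q))=q$) at $\tau_x^+\wedge\tau_{-N}^-$ yields the claim after letting $N\to\infty$ and invoking the limiting behaviour recorded in \eqref{scale:app}.

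Next, the two-sided upward exit formula $\Em_x[e^{-q\tau_c^+};\tau_c^+\le\tau_0^-]=W^{(q)}(x)/W^{(q)}(c)$ I would recover by checking that $x\mapsto\Em_x[e^{-q\tau_c^+};\tau_c^+\le\tau_0^-]$ is, up to normalization, the $q$-harmonic function on $(0,c)$ vanishing on $(-\infty,0)$, which matches $W^{(q)}$ through its Laplace-transform definition. The downward passage transform then follows by the change of measure: under $\Pm^{(v)}$ the process is again spectrally negative with Laplace exponent $\psi_v$ and scale functions $W_v^{(p)},Z_v^{(p)}$, so writing $p=u-\psi(v)$ and absorbing the density $e^{vX_t-\psi(v)t}$ reduces the joint transform $\Em_x[e^{-u\tau_0^-+vX(\tau_0^-)};\tau_0^-\le\tau_c^+]$ to a single-parameter two-sided exit problem under $\Pm^{(v)}$, whose answer is the stated combination of $Z_v^{(p)}$ and $W_v^{(p)}$.

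For the resolvent $R^{(q)}(x,dy)$ I would use the occupation-density representation: the $q$-potential of $X$ killed on leaving $[0,\infty)$ has a density in $y$ obtained by combining the two-sided exit probabilities, giving $e^{-\phi(q)y}W^{(q)}(x)-W^{(q)}(x-y)$. The two creeping-type identities are the delicate part. The downward creeping formula $\Em_x(e^{-q\tau_0^-};X(\tau_0^-)=0)=\tfrac{\sigma^2}{2}(W^{\prime(q)}(x)-\phi(q)W^{(q)}(x))$ requires isolating the event that $X$ passes below $0$ continuously, which can occur only through the Gaussian component; this is extracted from the behaviour of the resolvent density near the boundary and accounts for the prefactor $\sigma^2/2$. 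Likewise the hitting-point identity $\Em[e^{-q\tau^{\{x\}}};\tau^{\{x\}}<\infty]=e^{-\phi(q)x}-\tfrac{1}{\phi'(q)}W^{(q)}(-x)$ follows by decomposing according to the approach to $x$: the upward-creeping contribution $e^{-\phi(q)x}$ is corrected by the point mass of the renewal density, the latter governed by $1/\phi'(q)$ through the normalization in \eqref{scale:app}.

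The main obstacle is precisely these last two statements: unlike the exit and resolvent formulas, which follow mechanically from the martingale and scale-function machinery, the creeping and single-point hitting identities depend on the fine path structure of $X$---the presence of the Brownian component and the mass of the renewal measure at an individual point. Since all six identities are classical and recorded in Kyprianou \cite{Kyprianou2014:book:levy}, a fully self-contained treatment would reproduce these excursion-theory arguments; here it suffices to cite them and to indicate how each reduces to the others via the change of measure $\Pm^{(c)}$ and the limiting relations in \eqref{scale:app}.
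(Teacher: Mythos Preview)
Your proposal is correct and, in fact, more detailed than what the paper itself provides: the paper does not prove Proposition~\ref{prop:levy} at all but simply states that these identities ``can be found from \cite{Kyprianou2014:book:levy}'' after applying fluctuation theory and a change of measure. Your sketch of the martingale, scale-function, and Esscher arguments is accurate and consistent with that citation, so there is nothing to correct.
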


For the problems concerning the occupation time on intervals, $L(\cdot)$ in \eqref{def:localtime}, \cite{Loeffen2014:occupationtime:levy} found the Laplace transform of $L(\cdot)$ at first passage times and \cite{Guerin2014:occupationmeasure:levy} obtained the associated resolvent measure. In both works, the following auxiliary functions introduced  by \cite{Loeffen2014:occupationtime:levy} are useful 
\begin{align*}
\wa(x):=&\ W^{(p)}(x)+ (q-p) \int_{a}^{x} W^{(q)}(x-z) W^{(p)}(z) dz,\\
\mathcal{H}^{(p,q)}(x):=&\ e^{\phi(p) x} \left(1+ (q-p) \int_{0}^{x} e^{-\phi(p) z} W^{(q)}(z) dz\right).
\end{align*}
To simplify notations, we prefer to use a similar version with level $b$ included
\begin{align*}
\wa(x,y):=&\ W^{(p)}(x-y)+ (q-p) \int_{a}^{x}W^{(q)}(x-z)W^{(p)}(z-y) dz,\\
\wab(x,y):=&\ W^{(p)}(x-y)+ (q-p)\int_{a}^{b}W^{(p)}(x-z)\wa(z,y) dz,\\
\mathcal{H}_{a}^{(p,q)}(x):=&\ e^{\phi(p)x}+ (q-p) \int_{a}^{x} W^{(q)}(x-z) e^{\phi(p)z} dz,\\
\mathcal{H}^{(p,q)}_{(a,b)}(x):=&\ e^{\phi(p)x}+ (q-p)\int_{a}^{b} W^{(p)}(x-z)\mathcal{H}_{a}^{(p,q)}(z) dz.
\end{align*}
It can be checked that, $\D \phi'(q)\mathcal{H}_{(a,b)}^{(p,q)}(x)=\lim_{y\to-\infty} \wab(x,y)e^{\phi(q) y}$ and moreover
\begin{align*}
\wa(x,y)=&\ \mathcal{W}_{a-y}^{(p,q)}(x-y),\quad \mathcal{H}_{a}^{(p,q)}(x)=e^{\phi(p)a}\mathcal{H}^{(p,q)}(x-a),\\
\wab(x,y)=&\ \wa(x,y)- (q-p)\int_{b}^{x}W^{(p)}(x-z)\wa(z,y) dz,\\
e^{-\phi(p)a}\mathcal{H}^{(p,q)}_{(a,b)}(x)=&\ \mathcal{H}^{(p,q)}(x-a)- (q-p)\int_{b}^{x}W^{(p)}(x-z)\mathcal{H}^{(p,q)}(z-a) dz.
\end{align*}


The following results can be found from \cite{Loeffen2014:occupationtime:levy}.
\begin{prop}[ Loeffen et al. \cite{Loeffen2014:occupationtime:levy}]\label{prop:ronnie} 
Let $L$ be defined by \eqref{def:localtime}, for  $x\in[c,d]$, 
\[
\Em_{x}\left[e^{-L(\tau_{d}^{+})}; \tau_{d}^{+}<\tau_{c}^{-}\right]=\frac{\wab(x,c)}{\wab(d,c)}
\quad\text{and}\quad
\Em_{x}\left[e^{-L(\tau_{d}^{+})}; \tau_{d}^{+}<\infty\right]= \frac{\mathcal{H}^{(p,q)}_{(a,b)}(x)}{\mathcal{H}^{(p,q)}_{(a,b)}(c)}.
\]
\end{prop}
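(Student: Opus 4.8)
The plan is to reduce the two--sided identity to a renewal (fixed--point) equation via the strong Markov property, built up one threshold at a time, and then to obtain the one--sided identity by sending $c\to-\infty$. First I would treat the auxiliary functional $\tilde L$ that charges rate $p$ on $(-\infty,a)$ and rate $q$ on $[a,\infty)$, i.e. the profile of $L$ with the upper threshold $b$ removed, and set $v(x)=\Em_x[e^{-\tilde L(\tau_{d}^{+})};\tau_{d}^{+}<\tau_{c}^{-}]$. Decomposing according to the position relative to $a$: for $x\in[a,d]$ the process is discounted at the constant rate $q$ until it leaves $[a,d]$, so $\Em_x[e^{-q\tau_{d}^{+}};\tau_{d}^{+}<\tau_{a}^{-}]=W^{(q)}(x-a)/W^{(q)}(d-a)$ handles the upward exit, while the (possibly jumping) overshoot $X(\tau_{a}^{-})\le a$ is described by the down--crossing law available from the two--sided formulas and the resolvent $R^{(q)}$ of Proposition \ref{prop:levy}; for $x\in[c,a)$ the process creeps up to $a$ (no positive jumps) contributing $e^{-p\tau_{a}^{+}}$ on $\{\tau_{a}^{+}<\tau_{c}^{-}\}$, and is killed otherwise. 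Restarting by the strong Markov property and reassembling the integrals with the standard convolution identity $W^{(q)}(u)=W^{(p)}(u)+(q-p)\int_{0}^{u}W^{(q)}(u-z)W^{(p)}(z)\,dz$, I expect to identify $v(x)=\wa(x,c)/\wa(d,c)$; the consistency checks $a=c$ (giving $W^{(q)}(x-c)/W^{(q)}(d-c)$) and $a=d$ (giving $W^{(p)}(x-c)/W^{(p)}(d-c)$) confirm the bookkeeping.

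The functional $L$ differs from $\tilde L$ only in that the rate reverts from $q$ to $p$ above $b$, so I would run the identical strong Markov argument once more at the level $b$: above $b$ the process is discounted at rate $p$ and exits $[b,d]$ either upward to $d$ or downward into $[c,b)$, where the Step~1 answer $\wa(\cdot,c)$ applies after restarting. The recursive relation recorded in the preliminaries,
\[
\wab(x,c)=\wa(x,c)-(q-p)\int_{b}^{x}W^{(p)}(x-z)\,\wa(z,c)\,dz,
\]
is exactly the solution of this second renewal equation, so matching terms yields $\Em_x[e^{-L(\tau_{d}^{+})};\tau_{d}^{+}<\tau_{c}^{-}]=\wab(x,c)/\wab(d,c)$.

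For the one--sided identity I would let $c\to-\infty$. Since $\{\tau_{d}^{+}<\tau_{c}^{-}\}\uparrow\{\tau_{d}^{+}<\infty\}$ and $L(\tau_{d}^{+})$ does not depend on $c$, monotone convergence gives $\Em_x[e^{-L(\tau_{d}^{+})};\tau_{d}^{+}<\infty]=\lim_{c\to-\infty}\wab(x,c)/\wab(d,c)$. Multiplying numerator and denominator by $e^{\phi(q)c}$ and invoking the stated limit $\wab(x,y)e^{\phi(q)y}\to\phi'(q)\,\mathcal H^{(p,q)}_{(a,b)}(x)$ as $y\to-\infty$, the factors $\phi'(q)$ cancel and I obtain $\mathcal H^{(p,q)}_{(a,b)}(x)/\mathcal H^{(p,q)}_{(a,b)}(d)$, which is the second assertion.

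The delicate point is Step~1 and its repetition in Step~2: because $X$ has downward jumps, a single jump can carry the process from $[a,d]$ across $a$ deep into $[c,a)$, or even below $c$, so the three spatial regions are coupled through the overshoot distribution rather than merely through the boundaries. Controlling this coupling --- that is, showing that the resolvent $R^{(q)}$ together with the repeated convolutions against $W^{(p)}$ and $W^{(q)}$ telescopes precisely into the kernels defining $\wa$ and $\wab$ --- is where the real work lies; the remainder is the standard strong Markov and optional--stopping machinery used in \cite{Loeffen2014:occupationtime:levy}.
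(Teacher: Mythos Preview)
The paper does not actually prove this proposition: it is quoted verbatim from Loeffen, Renaud and Zhou \cite{Loeffen2014:occupationtime:levy} and is used as a black box in the proofs of Theorems~\ref{main:resol}--\ref{main:s=}. So there is no ``paper's own proof'' to compare against. That said, your sketch is the standard route taken in \cite{Loeffen2014:occupationtime:levy}: peel off one threshold at a time by strong Markov, reduce the resulting overshoot integrals to the scale-function convolution identity $W^{(q)}=W^{(p)}+(q-p)W^{(q)}*W^{(p)}$, and then pass to the one-sided statement by letting $c\to-\infty$. The organisation into Steps~1--3 and your identification of the overshoot coupling as the ``real work'' are both accurate.

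Two small remarks. First, you silently corrected what is evidently a typo in the displayed statement: the denominator in the second identity must be $\mathcal H^{(p,q)}_{(a,b)}(d)$, not $\mathcal H^{(p,q)}_{(a,b)}(c)$, and indeed the paper itself uses it with $d$ in the proof of Theorem~\ref{main:resol}. Second, the limiting relation you quote, $\wab(x,y)e^{\phi(q)y}\to\phi'(q)\mathcal H^{(p,q)}_{(a,b)}(x)$, is reproduced from the preliminaries but the exponent should be $\phi(p)$ rather than $\phi(q)$ (the leading behaviour of $\wab(x,y)$ for $y\to-\infty$ is governed by $W^{(p)}(x-y)$, since $y<a$); this does not matter for your argument because the factors cancel in the ratio, but it is worth getting right if you write out the limit explicitly.
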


The resolvent measure with respect to $L(\cdot)$ is given below, as one can identify the formula with that in \cite{Guerin2014:occupationmeasure:levy} after some calculations,

\begin{prop}[Gu\'erin and Renaud  \cite{Guerin2014:occupationmeasure:levy}]\label{prop:Guerin} For $x, y\in\mathbb{R}$
\begin{align}
&\ \int_{0}^{\infty}\Em_{x}\left[e^{-L(t)}; X(t)\in dy\right] dt\non\\
=&\ \left(\frac{e^{-\phi(p)(a+b)} \mathcal{H}^{(p,q)}_{(a,b)}(x) \mathcal{H}^{(p,q)}_{(a,b)}(a+b-y)}{\psi'(\phi(p))+ (q-p) \int_{0}^{b-a} e^{-\phi(p)z} \mathcal{H}^{(p,q)}(z) dz}- \wab(x,y)\right) dy. 
\end{align}
\end{prop}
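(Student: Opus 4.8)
The plan is to read off the stated potential density as the double limit $c\to-\infty$, $d\to+\infty$ of the occupation resolvent of $X$ killed on exiting the bounded interval $(c,d)$. Write $U_{(c,d)}(x,dy):=\int_0^\infty\Em_x[e^{-L(t)};X_t\in dy,\ t<\tdc]\,dt$ and, for the one-sided version, $U_{c}(x,dy):=\int_0^\infty\Em_x[e^{-L(t)};X_t\in dy,\ t<\tau_c^-]\,dt$. The first goal is the two-sided identity
\begin{equation*}
U_{(c,d)}(x,dy)=\Big(\frac{\wab(x,c)}{\wab(d,c)}\,\wab(d,y)-\wab(x,y)\Big)\,dy ,\tag{$\star$}
\end{equation*}
which for $p=q$ reduces to the classical killed resolvent $\big(W^{(p)}(x-c)W^{(p)}(d-y)/W^{(p)}(d-c)-W^{(p)}(x-y)\big)\,dy$ and is thus the natural occupation analogue.

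To prove $(\star)$ I would first note that the one-sided resolvent factorises as $U_c(x,dy)=\big(\Theta_c(y)\,\wab(x,c)-\wab(x,y)\big)\,dy$ for some function $\Theta_c$ of $y$ alone. This mirrors the form $R^{(q)}(x,dy)=\big(e^{-\phi(q)y}W^{(q)}(x)-W^{(q)}(x-y)\big)\,dy$ of Proposition~\ref{prop:levy}: off the diagonal, the starting-point dependence of a resolvent killed only from below is carried by the upward-passage solution, which by Proposition~\ref{prop:ronnie} is exactly $x\mapsto\wab(x,c)$, while the term $\wab(x,y)$ produces the diagonal singularity. Splitting the trajectory at the first up-crossing $\tau_d^+$ --- at which $X$ creeps, so $X_{\tau_d^+}=d$ because $X$ has no positive jumps --- the strong Markov property and the additivity of $L$ give $U_c(x,dy)=U_{(c,d)}(x,dy)+\Em_x[e^{-L(\tau_d^+)};\tau_d^+<\tau_c^-]\,U_c(d,dy)$. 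Substituting $\Em_x[e^{-L(\tau_d^+)};\tau_d^+<\tau_c^-]=\wab(x,c)/\wab(d,c)$ from Proposition~\ref{prop:ronnie} and the factorised forms of $U_c(x,\cdot)$ and $U_c(d,\cdot)$, the unknown $\Theta_c$ cancels and $(\star)$ follows. One may instead verify $(\star)$ directly, by checking that its right-hand side satisfies the resolvent equation of the occupation-killed process on $(c,d)$ together with the correct boundary behaviour, using the renewal equations behind Proposition~\ref{prop:ronnie}.

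Next I would pass to the limit in $(\star)$. Letting $c\to-\infty$ and using $\wab(x,c)\,e^{\phi(p)c}\to\phi'(p)\,\mathcal{H}^{(p,q)}_{(a,b)}(x)$ (the normalised form of the limit relation recorded before Proposition~\ref{prop:ronnie}), the prefactor tends to $\mathcal{H}^{(p,q)}_{(a,b)}(x)/\mathcal{H}^{(p,q)}_{(a,b)}(d)$, so that
\begin{equation*}
U_{d}(x,dy):=\lim_{c\to-\infty}U_{(c,d)}(x,dy)=\Big(\frac{\mathcal{H}^{(p,q)}_{(a,b)}(x)}{\mathcal{H}^{(p,q)}_{(a,b)}(d)}\,\wab(d,y)-\wab(x,y)\Big)\,dy .
\end{equation*}
For the remaining limit $d\to+\infty$ I read the growth in the first argument off the representation $\wab(x,y)=W^{(p)}(x-y)+(q-p)\int_a^b W^{(p)}(x-z)\wa(z,y)\,dz$: because the integral runs over the fixed window $[a,b]$, dominated convergence and $e^{-\phi(p)w}W^{(p)}(w)\to\phi'(p)$ from \eqref{scale:app} yield $e^{-\phi(p)d}\wab(d,y)\to\phi'(p)\big(e^{-\phi(p)y}+(q-p)\int_a^b e^{-\phi(p)z}\wa(z,y)\,dz\big)$, while the same argument applied to $\mathcal{H}^{(p,q)}_{(a,b)}$ gives $e^{-\phi(p)d}\mathcal{H}^{(p,q)}_{(a,b)}(d)\to\phi'(p)\big(\psi'(\phi(p))+(q-p)\int_0^{b-a}e^{-\phi(p)z}\mathcal{H}^{(p,q)}(z)\,dz\big)$, precisely the claimed denominator.

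The crux, and what I expect to be the main obstacle, is to identify the bracket in the first of these limits as a reflected copy of $\mathcal{H}^{(p,q)}_{(a,b)}$, namely
\begin{equation*}
e^{-\phi(p)y}+(q-p)\int_a^b e^{-\phi(p)z}\,\wa(z,y)\,dz=e^{-\phi(p)(a+b)}\,\mathcal{H}^{(p,q)}_{(a,b)}(a+b-y).
\end{equation*}
The reflection $y\mapsto a+b-y$ is the signature of the spatial duality $-X$ that drives the paper, and I would establish it by inserting the defining convolutions of $\wa$ and $\mathcal{H}^{(p,q)}_a$, applying Fubini, and substituting $z\mapsto a+b-z$ on $[a,b]$, so as to match the $W^{(q)}*W^{(p)}$ kernel of $\wa$ against the $W^{(q)}*e^{\phi(p)\cdot}$ kernel of $\mathcal{H}^{(p,q)}_a$. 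Granting this identity, dividing the two asymptotics gives $\wab(d,y)/\mathcal{H}^{(p,q)}_{(a,b)}(d)\to e^{-\phi(p)(a+b)}\mathcal{H}^{(p,q)}_{(a,b)}(a+b-y)\big/\big(\psi'(\phi(p))+(q-p)\int_0^{b-a}e^{-\phi(p)z}\mathcal{H}^{(p,q)}(z)\,dz\big)$, and letting $d\to+\infty$ in $U_d$ yields the assertion. As a consistency check, setting $p=q$ (so $\wab=W^{(p)}$ and $\mathcal{H}^{(p,q)}_{(a,b)}=e^{\phi(p)\cdot}$) collapses the formula to the standard $p$-resolvent $\big(\phi'(p)e^{\phi(p)(x-y)}-W^{(p)}(x-y)\big)\,dy$.
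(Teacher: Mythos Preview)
The paper does not prove this proposition: it is quoted from Gu\'erin and Renaud, with only the remark that the stated form can be matched to theirs ``after some calculations''. So there is no in-paper argument to compare against directly. What is worth noting is that your logical order is the reverse of the paper's: the paper \emph{uses} Proposition~\ref{prop:Guerin} as an input to derive the bounded-interval resolvent (your $(\star)$, which is Theorem~\ref{main:resol} here), whereas you propose to prove $(\star)$ first and then recover Proposition~\ref{prop:Guerin} by sending $c\to-\infty$ and $d\to+\infty$.

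That inversion is legitimate only if you supply an \emph{independent} proof of $(\star)$, and this is where your outline has a genuine gap. The factorisation $U_c(x,dy)=\big(\Theta_c(y)\,\wab(x,c)-\wab(x,y)\big)\,dy$ is asserted by analogy with the classical $p=q$ case, not established. The heuristic ``the $x$-dependence is carried by the upward-passage solution'' is correct in spirit, but turning it into a proof for the $L$-weighted resolvent --- in particular, justifying that the diagonal correction is exactly $-\wab(x,y)$ --- is essentially the whole problem; note that in the paper the analogous one-sided density (killed from above, equation~\eqref{eqn:inproof:resol:1}) is obtained precisely by invoking Proposition~\ref{prop:Guerin}, so you cannot borrow that step without circularity. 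Your fallback ``verify $(\star)$ directly via the resolvent equation and boundary behaviour'' is not spelled out enough to assess.

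Once $(\star)$ is granted, the limiting argument you sketch is sound. The asymptotics for $e^{-\phi(p)d}\wab(d,y)$ and $e^{-\phi(p)d}\mathcal{H}^{(p,q)}_{(a,b)}(d)$ follow from \eqref{scale:app} together with dominated convergence over the fixed window $[a,b]$, and the reflection identity you isolate as the crux does indeed fall out of the substitution $z\mapsto a+b-z$ combined with Fubini on the nested $W^{(p)}*W^{(q)}$ and $W^{(q)}*e^{\phi(p)\cdot}$ convolutions --- though it takes more unpacking than a single sentence. In effect, this part of your argument is a fleshed-out version of the ``calculations'' the paper leaves implicit when matching its formulation to Gu\'erin--Renaud's.
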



\section{Main results}
Recall that $d>0>c$, $d\geq b\geq a\geq c$ and $p,q,\lambda\geq0$ are fixed constants. Since the two-sided passage problem is
%
what we are concerned in the paper, 
we would need the following resolvent measure which extends the results in \cite{Guerin2014:occupationmeasure:levy},

\begin{thm}\label{main:resol} Let $f$ be a nonnegative measurable function,  we have for $x\in[c,d]$
\begin{align}
U^{(p,q)}_{(a,b)}f(x):=&\ \int_{0}^{\infty} \Em_{x}\left[e^{-L(t)}  f(X_{t}); t\leq \tdc\right] dt\non\\
=&\ \int_{c}^{d} f(y) \left(\frac{\wab(x,c)}{\wab(d,c)} \wab(d,y) - \wab(x,y)\right) dy.
\end{align}
\end{thm}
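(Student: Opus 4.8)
The plan is to compute the resolvent $U^{(p,q)}_{(a,b)}f(x)$ by reducing the two-sided problem (killing when $X$ exits $(c,d)$) to the one-sided resolvent of Proposition \ref{prop:Guerin} via the strong Markov property. The key structural idea is that the two-sided killed process can be obtained from the one-sided killed process (killed only at $\tau_c^-$, i.e. on exiting $[c,\infty)$) by additionally stopping at $\tau_d^+$. So first I would relate $U^{(p,q)}_{(a,b)}$ to the resolvent with killing only below $c$. By spatial homogeneity I may shift so that $c$ plays the role of the origin; the one-sided resolvent killed at $\tau_c^-$ then has a density of the same form as in Proposition \ref{prop:Guerin} but with the kernel $\wab(x,y)$ and an appropriate boundary term, since the functions $\wab(x,y)$ are built precisely so that they transform correctly under such shifts (as recorded in the identities relating $\wa(x,y)$ to $\mathcal{W}^{(p,q)}_{a-y}(x-y)$).

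Next I would carry out the exit-at-$d$ decomposition. For the process killed below $c$, I split the expectation defining the resolvent according to whether the path reaches level $d$ before time $t$ or not. Using the strong Markov property at $\tau_d^+$ together with the fact that $X$ is spectrally negative (so $X(\tau_d^+)=d$ almost surely, with no overshoot), I would write
\begin{align*}
\Em_{x}\!\left[e^{-L(t)} f(X_t); t\le \tau_c^-\right]
=&\ \Em_{x}\!\left[e^{-L(t)} f(X_t); t\le \tdc\right]\\
&\ +\Em_{x}\!\left[e^{-L(\tau_d^+)}; \tau_d^+\le \tau_c^-\wedge t\right]\,\Em_{d}\!\left[e^{-L(t')} f(X_{t'}); t'\le \tau_c^-\right].
\end{align*}
Integrating over $t\in(0,\infty)$ turns the first term into $U^{(p,q)}_{(a,b)}f(x)$, the last factor into the one-sided resolvent evaluated at starting point $d$, and the middle factor into the first-passage Laplace transform $\Em_x[e^{-L(\tau_d^+)};\tau_d^+<\tau_c^-]=\wab(x,c)/\wab(d,c)$ supplied by Proposition \ref{prop:ronnie}. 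Rearranging yields
\[
U^{(p,q)}_{(a,b)}f(x)=\mathcal{R}(x)-\frac{\wab(x,c)}{\wab(d,c)}\,\mathcal{R}(d),
\]
where $\mathcal{R}(x)=\int_c^d f(y)\,\rho(x,y)\,dy$ denotes the one-sided (killed-below-$c$) resolvent restricted to $y\in(c,d)$.

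It then remains to identify the density $\rho(x,y)$ of the one-sided resolvent so that the bracket collapses to the stated kernel $\frac{\wab(x,c)}{\wab(d,c)}\wab(d,y)-\wab(x,y)$. The cleanest route is to observe that the global (no upper killing) resolvent density from Proposition \ref{prop:Guerin} differs from $\wab(x,y)$ only by the rank-one term involving $\mathcal{H}^{(p,q)}_{(a,b)}$; when I subtract the $\mathcal{R}(d)$ contribution weighted by $\wab(x,c)/\wab(d,c)$, that global $\mathcal{H}$-term must cancel, because both $\mathcal{R}(x)$ and $\wab(x,c)\mathcal{R}(d)/\wab(d,c)$ carry it with the same $x$-dependence $\mathcal{H}^{(p,q)}_{(a,b)}(x)$ up to the first-passage factor. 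I expect the main obstacle to be exactly this cancellation: verifying that the $\mathcal{H}$-dependent pieces drop out requires the identity $\phi'(q)\mathcal{H}^{(p,q)}_{(a,b)}(x)=\lim_{y\to-\infty}\wab(x,y)e^{\phi(q)y}$ and the algebraic relations among $\wab$, $\mathcal{H}^{(p,q)}_{(a,b)}$, and the normalizing denominator listed in the preliminaries, rather than any further probabilistic input. Once that cancellation is confirmed, the restriction $y\in(c,d)$ together with $\wab(d,c)$ appearing in the denominator produces precisely the claimed formula, and continuity in $x$ on $[c,d]$ gives the boundary cases.
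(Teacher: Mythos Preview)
Your decomposition at $\tau_d^+$ (step 2) is sound once time is integrated out, but the input you need---the $L$-weighted resolvent killed only at $\tau_c^-$---is not available, and your argument for obtaining it does not work. Proposition~\ref{prop:Guerin} is the \emph{global} resolvent with no killing whatsoever; a spatial shift does not introduce killing at $c$, so ``shifting so that $c$ plays the role of the origin'' does not produce the one-sided object $\mathcal{R}$ you want. Moreover, your cancellation claim fails: if the boundary piece of $\rho(x,y)$ carried the factor $\mathcal{H}^{(p,q)}_{(a,b)}(x)$, then in $\mathcal{R}(x)-\tfrac{\wab(x,c)}{\wab(d,c)}\mathcal{R}(d)$ you would be left with $\mathcal{H}^{(p,q)}_{(a,b)}(x)-\tfrac{\wab(x,c)}{\wab(d,c)}\mathcal{H}^{(p,q)}_{(a,b)}(d)$, which is \emph{not} zero. (What is actually true is that the killed-below-$c$ density has the form $C(y)\wab(x,c)-\wab(x,y)$, and then the cancellation is automatic; but establishing that form is essentially the theorem with $d\to\infty$, so you are assuming what has to be proved.)

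The paper avoids this by reversing the order of the two killings. First it kills at $\tau_d^+$: here $X(\tau_d^+)=d$ deterministically, and Proposition~\ref{prop:ronnie} supplies the one-sided factor $\mathcal{H}^{(p,q)}_{(a,b)}(x)/\mathcal{H}^{(p,q)}_{(a,b)}(d)$, so the global $\mathcal{H}$-term from Proposition~\ref{prop:Guerin} \emph{does} cancel and one gets the killed-above-$d$ density $\tfrac{\mathcal{H}^{(p,q)}_{(a,b)}(x)}{\mathcal{H}^{(p,q)}_{(a,b)}(d)}\wab(d,y)-\wab(x,y)$. The second step---killing below $c$---is the delicate one, because $X(\tau_c^-)$ has a random overshoot. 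The paper's key idea, which your proposal is missing, is to replace $\tau_c^-$ by the first \emph{hitting} time $\tau^{\{c\}}$: since $X$ has no positive jumps, on $\{X_t>c,\ t\le\tau_d^+\}$ one has $\{\tau_c^-<t\}=\{\tau^{\{c\}}<t\}$, and $X(\tau^{\{c\}})=c$ is deterministic. Applying the strong Markov property at $\tau^{\{c\}}$ (and computing $\Em_x[e^{-L(\tau^{\{c\}})};\tau^{\{c\}}\le\tau_d^+]$ from Proposition~\ref{prop:ronnie} along the way) then finishes the proof. Incidentally, your displayed identity at fixed $t$ is not correct as written---after $\tau_d^+$ one must integrate over the remaining time, so it only becomes a product after the $dt$-integration.
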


We can see that the measure $U^{(p,q)}_{(a,b)}(x,dy)$ is absolutely continuous on its support $[c,d]$, and its density function is given by 
\begin{equation}
u^{(p,q)}_{(a,b)}(x,y)=\left(\frac{\wab(x,c)}{\wab(d,c)} \wab(d,y) - \wab(x,y)\right). \label{eqn:resol:denstity}
\end{equation}
When $p=q$, $\wab(x,y)= W^{(p)}(x-y)$, and \eqref{eqn:resol:denstity} is then reduced to the classical case as shown in Proposition \ref{prop:levy} on $[c,d]$.
We also need $\mu^{(\lambda)}(dy)$ defined below
which is the $\lambda$-capacity measure of $\mathbb{R}^{-}$,

\begin{lem}\label{lem:mu} For $\lambda>0$, let $\mu^{(\lambda)}(dy)$ be the measure on $\mathbb{R}$ defined by
\[
\mu^{(\lambda)}(dy)= \lambda \int_{\mathbb{R}} \Em_{z}\left[e^{-\lambda \tau_{0}^{-}}; (-X(\tau_{0}^{-}))\in dy, \tau_{0}^{-}<\infty\right] dz,
\]
then $\mu^{(\lambda)}(\cdot)$ is a Randon measure concentrating on  $[0,\infty)$ and for $s\geq0$
\begin{equation}
\widehat{\mu^{(\lambda)}}(s)=\int_{\mathbb{R}^{+}} e^{-s y} \mu^{(\lambda)}(dy)=\frac{\phi(\lambda)(\psi(s)- \lambda)}{s(s-\phi(\lambda))}.
\end{equation}
Taking $s\to\infty$, we have $\mu^{(\lambda)}(\{0\})=\frac{\sigma^{2}}{2}\phi(\lambda)$.
\end{lem}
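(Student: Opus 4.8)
The plan is to identify $\mu^{(\lambda)}$ by computing its Laplace transform, since a Radon measure on $[0,\infty)$ is determined by the values of $\widehat{\mu^{(\lambda)}}(s)$ for large $s$. First I note that $\mu^{(\lambda)}$ is automatically carried by $[0,\infty)$: under $\Pm_z$ with $z\geq0$ the process is spectrally negative, so at $\tau_0^-$ it either creeps to $0$ or jumps strictly below $0$, giving $-X(\tau_0^-)\geq0$, while for $z<0$ one has $\tau_0^-=0$ and $-X(\tau_0^-)=-z>0$. Using $e^{-sy}=e^{sX(\tau_0^-)}$ on $\{-X(\tau_0^-)=y\}$, I would write
\[
\widehat{\mu^{(\lambda)}}(s)=\lambda\int_{\mathbb{R}}\Em_z\left[e^{-\lambda\tau_0^-+sX(\tau_0^-)};\tau_0^-<\infty\right]dz
\]
and split the integral at $0$. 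The range $z<0$ contributes $\lambda\int_{-\infty}^0 e^{sz}\,dz=\lambda/s$, and the whole task reduces to integrating $g(z):=\Em_z[e^{-\lambda\tau_0^-+sX(\tau_0^-)};\tau_0^-<\infty]$ over $z\in[0,\infty)$.

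For a closed form of $g$ I would take the two--sided identity of Proposition \ref{prop:levy} with $u=\lambda$, $v=s$, $p=\lambda-\psi(s)$, and let $c\to\infty$, using the limiting behaviour \eqref{scale:app} of the tilted scale functions together with $W_s^{(\lambda-\psi(s))}(z)=e^{-sz}W^{(\lambda)}(z)$. This gives, for $z\geq0$,
\[
g(z)=e^{sz}+(\lambda-\psi(s))e^{sz}\int_0^z e^{-sy}W^{(\lambda)}(y)\,dy-\frac{\lambda-\psi(s)}{\phi(\lambda)-s}\,W^{(\lambda)}(z),
\]
an expression that is finite for every $s\geq0$ (the apparent pole at $s=\phi(\lambda)$ is removable) and that correctly reduces to $Z^{(\lambda)}(z)-\tfrac{\lambda}{\phi(\lambda)}W^{(\lambda)}(z)$ at $s=0$.

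The main obstacle is that $\int_0^\infty W^{(\lambda)}(z)\,dz=\infty$, so the three terms of $g$ are not separately integrable, even though $g$ is integrable because the leading $e^{\phi(\lambda)z}$ contributions cancel. I would tame this by the regulariser $e^{-\beta z}$: since $0\leq g\leq1$, the transform $\widehat g(\beta):=\int_0^\infty e^{-\beta z}g(z)\,dz$ is finite for every $\beta>0$, and for $\beta>\max\{s,\phi(\lambda)\}$ each term converges, with $\int_0^\infty e^{-\beta z}W^{(\lambda)}(z)\,dz=(\psi(\beta)-\lambda)^{-1}$. A routine Fubini computation then produces
\[
\widehat g(\beta)=\frac{1}{\psi(\beta)-\lambda}\left(\frac{\psi(\beta)-\psi(s)}{\beta-s}+\frac{\psi(s)-\lambda}{\phi(\lambda)-s}\right),
\]
which, since the bracket vanishes at $\beta=\phi(\lambda)$, extends analytically to all $\beta>0$ and continuously to $\beta=0$. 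By monotone convergence $\int_0^\infty g=\lim_{\beta\downarrow0}\widehat g(\beta)$; letting $\beta\downarrow0$ (so that $\psi(\beta)\to0$ and the divided difference tends to $\psi(s)/s$) and adding the $\lambda/s$ from $z<0$, a short simplification yields $\widehat{\mu^{(\lambda)}}(s)=\phi(\lambda)(\psi(s)-\lambda)/\big(s(s-\phi(\lambda))\big)$. Finiteness of this quantity for every $s>0$ also delivers the Radon property, since $\mu^{(\lambda)}([0,T])\leq e^{sT}\widehat{\mu^{(\lambda)}}(s)$.

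For the atom I would invoke that $\lim_{s\to\infty}\widehat{\mu^{(\lambda)}}(s)=\mu^{(\lambda)}(\{0\})$ for a Radon measure on $[0,\infty)$ with finite transform (the mass on $(0,\infty)$ is annihilated by dominated convergence). As $\psi(s)/s^2\to\tfrac12\sigma^2$ when $s\to\infty$, the explicit formula gives $\mu^{(\lambda)}(\{0\})=\tfrac12\sigma^2\phi(\lambda)$. This is consistent with the interpretation of the atom as the creeping part $\lambda\int_0^\infty\Em_z[e^{-\lambda\tau_0^-};X(\tau_0^-)=0]\,dz$, which by the creeping identity of Proposition \ref{prop:levy} equals $\tfrac12\sigma^2\lambda\int_0^\infty\big(W^{\prime(\lambda)}(z)-\phi(\lambda)W^{(\lambda)}(z)\big)\,dz$; evaluating this last integral through its Laplace transform at $0$ and using $W^{(\lambda)}(0)=0$ (valid when $\sigma>0$) again yields $\tfrac12\sigma^2\phi(\lambda)$. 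The crux of the argument is the regularisation in the third step that handles the non-integrable scale-function pieces; the remaining steps are bookkeeping.
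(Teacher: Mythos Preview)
Your proposal is correct and follows essentially the same route as the paper. Both arguments express $g(z)=\Em_z[e^{-\lambda\tau_0^-+sX(\tau_0^-)}]$ via the one-sided exit formula (you reach it by sending the upper barrier to infinity, the paper quotes it directly under the tilted measure), then regularise the divergent $z$-integral by an extra Laplace factor (your $e^{-\beta z}$, the paper's $e^{-tx}$), compute the resulting transform from the scale-function Laplace identities, extend analytically, and finally send the regularisation parameter to $0$; the algebraic simplification to $\phi(\lambda)(\psi(s)-\lambda)/\big(s(s-\phi(\lambda))\big)$ is identical.
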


We are now ready to state our two main results.  

\begin{thm}\label{main:s+} For $x\in[c,d]$, we have for $y\in[0, d)$
\begin{equation}
\Em_{x}\left[e^{-L(\sxp)}; X(\sxp-)\in dy, 0<\sxp\leq \tdc\right] 
 = \uab(x,y) \mu^{(\lambda)}(dy),
\end{equation}
where $\uab(x,y)$ is the resolvent density as in \eqref{eqn:resol:denstity}  with $(p,q)$ replaced by $(p+\lambda, q+\lambda)$. 
In particular, we have from Lemma \ref{lem:mu} that
\begin{equation}
\Em_{x}\left[e^{-L(\sxp)}; X(\sxp-)=0, 0<\sxp\leq \tdc\right]
= \frac{\sigma^{2}}{2} \phi(\lambda)  \uab(x,0).
\end{equation}
\end{thm}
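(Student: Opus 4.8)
The plan is to exploit the duality between last and first passage times through time reversal at $e_{\lambda}$, as announced in the introduction, so that the non-Markovian last passage time $\sxp$ is replaced by a genuine first passage time of the dual process $\hat{X}=-X$, to which the strong Markov property applies. As a preliminary I would record the pathwise dichotomy forced by the absence of positive jumps: at $\sxp$ the process leaves the positive half-line either by a strictly negative jump across $0$, so that $X(\sxp-)=y>0$, or by creeping continuously down to $0$, so that $X(\sxp-)=0$, the latter carrying positive mass only when $\sigma>0$. This is exactly the splitting of $\mu^{(\lambda)}$ into an absolutely continuous part on $(0,\infty)$ and the atom $\mu^{(\lambda)}(\{0\})=\frac{\sigma^{2}}{2}\phi(\lambda)$ from Lemma \ref{lem:mu}, and it signals that the in-particular statement will follow from the general one by reading off the mass of $\mu^{(\lambda)}$ at $0$ via the creeping identity (the third formula in Proposition \ref{prop:levy}).

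Next I would set up the reversal. Writing $\tilde{X}_{u}=X_{(e_{\lambda}-u)-}$ on $[0,e_{\lambda}]$, the reversal of $\{s<e_{\lambda}:X_{s}>0\}$ shows that $e_{\lambda}-\sxp$ is the first passage time of $\tilde{X}$ above $0$ and that $X(\sxp-)$ is the value of $\tilde{X}$ at that first passage, i.e.\ its overshoot; since $\tilde{X}$ has the law of the dual spectrally positive process started from $X_{e_{\lambda}}\leq 0$, the whole event becomes a first-passage event to which the strong Markov property applies at the first upcrossing. I then split the trajectory at $\sxp$. The post-$\sxp$ piece, on which the original process stays below $0$ on $(\sxp,e_{\lambda})$, becomes after reversal the dual process run from its random starting value $X_{e_{\lambda}}$ up to its first passage above $0$ with overshoot $y$; integrating over that starting value with the exponential horizon reproduces precisely $\mu^{(\lambda)}(dy)$, the Lebesgue integral $\int_{\mathbb{R}}\cdots\,dz$ in Lemma \ref{lem:mu} being the integral over this starting point. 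The pre-$\sxp$ piece, on which the original process is confined to $(c,d)$ while accumulating $L$ because $\sxp\leq\tdc$, becomes the dual motion from $y$ back to $x$ before leaving $(c,d)$; here the exponential killing at rate $\lambda$ merges with the occupation weight through $\lambda t+L(t)=L^{(p+\lambda,q+\lambda)}(t)$, using $\bo_{\{X\notin(a,b)\}}+\bo_{\{X\in(a,b)\}}=1$ in \eqref{def:localtime}, so this factor is the $\lambda$-resolvent of the killed, $L$-weighted process, which by Theorem \ref{main:resol} with $(p,q)$ replaced by $(p+\lambda,q+\lambda)$ is the density $\uab(x,y)$ of \eqref{eqn:resol:denstity}. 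Multiplying the two factors gives $\uab(x,y)\,\mu^{(\lambda)}(dy)$.

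I expect the main obstacle to be making the reversal rigorous and bookkeeping the spatial data cleanly: matching the left limit $X(\sxp-)$ with the dual overshoot rather than with the post-jump value, checking that the duality of resolvents with respect to Lebesgue measure lands on $\uab(x,y)$ and not on its $x\leftrightarrow y$ transpose, and verifying that the integration over $X_{e_{\lambda}}$ genuinely reassembles $\mu^{(\lambda)}$ together with its creeping atom. The creeping case $y=0$, where the reversed first passage is continuous, must be handled separately with the creeping formula of Proposition \ref{prop:levy}, and one must confirm that the confinement coming from $\sxp\leq\tdc$ transfers correctly to the dual excursion.
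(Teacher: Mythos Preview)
Your proposal is correct and follows essentially the same route as the paper: time reversal at $e_{\lambda}$ converts $\sxp$ into the first upcrossing $\widetilde{\tau}_{0}^{+}$ of the reversed process, and the strong Markov property there factorises the expectation into the resolvent piece $\uab(x,y)$ and the first-passage piece $\mu^{(\lambda)}(dy)$, with the creeping atom read off from Lemma~\ref{lem:mu}. The paper resolves precisely the obstacles you flagged---making the reversal rigorous, turning the law of $X_{e_{\lambda}}$ into Lebesgue measure $dz$, and fixing the $x\leftrightarrow y$ orientation of the resolvent---by integrating the left side against an arbitrary test function $f(x)\,dx$ and invoking Lebesgue self-duality (Propositions~\ref{prop:inproof:1} and~\ref{prop:inproof:3}), establishing first the dual-resolvent identity \eqref{eqn:inproof:dual} and then extracting the pointwise statement from the arbitrariness of $f$.
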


\begin{thm}\label{main:s-}  For $x\in[c,d]$, we have for $y\in(c,0)$
\begin{align}
\Em_{x}\left[e^{-L(\sxm)}; X(\sxm)\in dy; 0<\sxm\leq\tdc\right]=&\  \lambda \uab(x,y) dy,\\
\Em_{x}\left[e^{-L(\sxm)}; X(\sxm)=0, 0<\sxm\leq\tdc\right]=&\  \frac{\lambda}{\phi(\lambda)} \uab(x,0).\label{eqn:main:s-:=}
\end{align}
\end{thm}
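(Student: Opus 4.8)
The plan is to treat the two assertions separately, absorbing the killing rate $\lambda$ into the occupation functional. The crucial elementary observation is that, since $\bo_{\{X_s\notin(a,b)\}}+\bo_{\{X_s\in(a,b)\}}=1$, one has
\[
e^{-\lambda t}e^{-L(t)}=\exp\Big(-\!\int_0^t\big((p+\lambda)\bo_{\{X_s\notin(a,b)\}}+(q+\lambda)\bo_{\{X_s\in(a,b)\}}\big)ds\Big),
\]
so that integrating against the density $\lambda e^{-\lambda t}$ of $e_{\lambda}$ turns an expectation evaluated at $e_{\lambda}$ into the resolvent of Theorem \ref{main:resol} with $(p,q)$ replaced by $(p+\lambda,q+\lambda)$; this is exactly the source of $\uab(x,y)$. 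The second guiding principle is the dichotomy forced by the absence of positive jumps: at $\sxm$ the process either creeps up to $0$, giving $X(\sxm)=0$, or the killing time catches it strictly below $0$, in which case necessarily $\sxm=e_{\lambda}$. Organising the proof around this dichotomy is precisely what the dual/time-reversal viewpoint makes transparent.

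For $y\in(c,0)$ I would first justify the path identity $\{X(\sxm)\in dy,\,0<\sxm\le\tdc\}=\{X(e_{\lambda})\in dy,\,e_{\lambda}\le\tdc\}$. Indeed, if $X(\sxm)=y<0$ then right-continuity at $\sxm$ forces $X<0$ on a right neighbourhood of $\sxm$, contradicting the supremum definition of $\sxm$ unless $\sxm=e_{\lambda}$; conversely, as $e_{\lambda}$ is independent and $X$ a.s.\ has no jump there, $X(e_{\lambda})<0$ gives $X<0$ just to the left of $e_{\lambda}$ and hence $\sxm=e_{\lambda}$. On this event $L(\sxm)=L(e_{\lambda})$ and the constraint $0<\sxm\le\tdc$ reads $e_{\lambda}\le\tdc$. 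Conditioning on $e_{\lambda}$ and using the display above, this probability equals $\lambda\,U^{(p+\lambda,q+\lambda)}_{(a,b)}(x,dy)$, which by Theorem \ref{main:resol} is $\lambda\,\uab(x,y)\,dy$.

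The creeping atom at $0$ is the real difficulty. The plan is a last-exit decomposition at the final creeping passage $\sxm$: the pre-$\sxm$ segment is a path from $x$ that stays in $(c,d)$, dips below $0$ and creeps back to $0$, carrying the weight $e^{-L(\sxm)}$, while the post-$\sxm$ segment is an independent copy started at $0$ that stays $\ge 0$ until the remaining, memoryless, exponential time. Absorbing $\lambda$ as above makes the pre-$\sxm$ contribution the resolvent density $\uab(x,0)$ at the creeping level. The obstruction is that $\sxm$ is not a stopping time and, for $X$ of unbounded variation, $0$ is regular for $(-\infty,0)$, so a naive strong-Markov split pairs the vanishing probability that $X$ from $0$ stays nonnegative against the singular intensity of creeping passages — an indeterminate $0\times\infty$. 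Reversing time at $e_{\lambda}$, which turns the creeping up of $X$ (the only way a spectrally negative path can leave $(-\infty,0)$) into a creeping passage event of the dual spectrally positive process $-X$, is what resolves this cleanly; the creeping itself is then weighted by Proposition \ref{prop:levy}'s formula $\Em_x(e^{-q\tau_{0}^{-}};X(\tau_{0}^{-})=0)=\frac{\sigma^{2}}{2}(W^{\prime(q)}(x)-\phi(q)W^{(q)}(x))$.

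I expect the main work to be the bookkeeping of this last-exit decomposition and, in particular, the identification of the scalar multiplying $\uab(x,0)$. The constant should emerge as a \emph{stay-above-$0$} factor for the process killed at rate $\lambda$, and I would pin it down using $\Em_z[e^{-\lambda\tau_{0}^{-}};\tau_{0}^{-}<\infty]=Z^{(\lambda)}(z)-\frac{\lambda}{\phi(\lambda)}W^{(\lambda)}(z)$ together with the asymptotics $Z^{(q)}(x)/W^{(q)}(x)\to q/\phi(q)$ from \eqref{scale:app}; this ratio is the natural origin of the factor $\frac{\lambda}{\phi(\lambda)}$ in \eqref{eqn:main:s-:=}. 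As a consistency check one verifies that the atom is genuinely distinct from the $y\uparrow 0$ limit of the density part, reflecting that the two contributions arise from the different mechanisms — killing while below $0$ versus creeping up and surviving — singled out by the dichotomy above.
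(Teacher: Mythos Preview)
Your treatment of the density part for $y\in(c,0)$ is correct and coincides with the paper's argument: both reduce to the path identity $\{X(\sxm)\in dy,\,0<\sxm\}=\{X(e_\lambda)\in dy\}$ and read off $\lambda\,\uab(x,y)\,dy$ directly from Theorem~\ref{main:resol} with parameters shifted by $\lambda$.

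For the atom at $0$, however, there is a concrete gap. You correctly flag time reversal at $e_\lambda$ as the way out of the $0\times\infty$ indeterminacy, but you then reach for the wrong tools. The formula $\frac{\sigma^{2}}{2}\big(W^{\prime(q)}(x)-\phi(q)W^{(q)}(x)\big)$ from Proposition~\ref{prop:levy} is the probability that the \emph{original} spectrally negative process creeps \emph{down} through $0$; it requires a Gaussian component and is not what arises after reversal. Under reversal the last upcrossing of $0$ by $X$ becomes the \emph{first downward} passage $\tau_0^{-}$ of the dual $\widehat X=-X$, which is spectrally \emph{positive}. A spectrally positive process has no negative jumps, so from any $z>0$ it reaches $0$ continuously with probability one, and the only input needed is $\widehat{\Em}_z\big[e^{-\lambda\tau_0^{-}}\big]=\Em\big[e^{-\lambda\tau_z^{+}}\big]=e^{-\phi(\lambda)z}$. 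The paper carries this out by integrating against a test function $f(x)\,dx$ and invoking the duality Propositions~\ref{prop:inproof:1}--\ref{prop:inproof:3}; the expectation then factors as
\[
\Big(\lambda\int_{0}^{\infty} e^{-\phi(\lambda)z}\,dz\Big)\times\int f(x)\,\uab(x,0)\,dx
=\frac{\lambda}{\phi(\lambda)}\int f(x)\,\uab(x,0)\,dx,
\]
so $\lambda/\phi(\lambda)$ is simply $\lambda\int_0^\infty e^{-\phi(\lambda)z}\,dz$. It is \emph{not} obtained from a ``stay-above-$0$'' probability of $X$ started at $0$ (which vanishes in unbounded variation, as you note), nor from $Z^{(\lambda)}(z)-\tfrac{\lambda}{\phi(\lambda)}W^{(\lambda)}(z)$ or the asymptotics~\eqref{scale:app}; those pertain to the forward spectrally negative process, whereas after reversal the computation lives entirely on the dual side, where no creeping subtlety remains.
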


Beside the last passage time to a given level, saying $0$ here, another interesting last time would be the last hitting time $\sxe$ which was also studied in \cite{Baurdoux2009:lastexp:levy}.

\begin{thm}\label{main:s=} For $x\in[c,d]$, we have
\begin{equation}
\Em_{x}\left[e^{-L(\sxe)}; 0<\sxe\leq \tdc\right]=\frac{1}{\phi'(\lambda)} \uab(x,0).
\end{equation}
\end{thm}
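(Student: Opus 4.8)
The plan is to combine an exponential-killing reduction for the path run up to $\sxe$ with a last-exit decomposition at the level $0$, the latter being made rigorous through the time-reversal (dual) argument advertised in the introduction. First I would absorb the independent clock $e_{\lambda}$ into the occupation functional: since $e_{\lambda}$ is exponential of rate $\lambda$ and independent of $X$, conditioning on $\{t<e_{\lambda}\}$ produces the factor $e^{-\lambda t}$, and $e^{-\lambda t}e^{-L(t)}=e^{-(L(t)+\lambda t)}$ is precisely the weight \eqref{def:localtime} with $(p,q)$ replaced by $(p+\lambda,q+\lambda)$. Hence everything accumulated strictly before the last zero is governed by the killed resolvent of Theorem \ref{main:resol} at the parameters $(p+\lambda,q+\lambda)$; in particular the occupation-weighted Green's mass that $X$ deposits at the level $0$ while remaining inside $(c,d)$ is measured by $\uab(x,0)$ from \eqref{eqn:resol:denstity}.

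The substance is the passage from this resolvent mass to the last-hitting statement, and here I would invoke the dual argument. Because $\sxe$ is not a stopping time, I reverse time at $e_{\lambda}$: under the reversal the dual $\hat{X}=-X$ (spectrally positive) is run for an independent exponential time, and the \emph{last} time $X$ visits $0$ before $e_{\lambda}$ becomes the \emph{first} time the reversed process visits $0$. First hitting of $0$ being a genuine stopping time, the strong Markov property splits the expectation into a pre-hitting contribution, proportional to $\uab(x,0)$ by the reduction above, and an $x$-independent final-escape factor $\kappa_{\lambda}$ describing the final excursion away from $0$ that outlasts the clock without returning. Schematically,
\[
\Em_{x}\left[e^{-L(\sxe)}; 0<\sxe\leq \tdc\right]=\kappa_{\lambda}\,\uab(x,0),
\]
the constant $\kappa_{\lambda}$ being independent of $x$, of $c,d$ and of $(a,b)$ by the Markov property at the last zero, since after $\sxe$ one only requires that $0$ is never revisited before $e_{\lambda}$. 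The same device sorts the final departure into a creeping part (present iff $\sigma^{2}>0$) and a jumping part without extra effort, exactly as in the creeping/jumping split already seen in Theorems \ref{main:s+} and \ref{main:s-}.

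It remains to identify $\kappa_{\lambda}=1/\phi'(\lambda)$, which I would read off the return-to-zero structure. The hitting-time transform $\Em[e^{-\lambda\tau^{\{0\}}};\tau^{\{0\}}<\infty]=1-W^{(\lambda)}(0)/\phi'(\lambda)$ from Proposition \ref{prop:levy}, together with the normalisation $e^{-\phi(\lambda)x}W^{(\lambda)}(x)\to\phi'(\lambda)$ in \eqref{scale:app}, fixes the local-time scale against which the final excursion is measured, and the associated escape rate is $1/\phi'(\lambda)$. As a consistency check I would confirm that $\kappa_{\lambda}$ reconciles the two creeping coefficients already obtained, namely $\frac{\lambda}{\phi(\lambda)}$ for $\{X(\sxm)=0\}$ and $\frac{\sigma^{2}}{2}\phi(\lambda)$ for $\{X(\sxp-)=0\}$; note in particular that in the global limit $\lambda\downarrow 0$ (for $X$ drifting to $+\infty$) both $\frac{\lambda}{\phi(\lambda)}$ and $\frac{1}{\phi'(\lambda)}$ coincide while the downward coefficient $\frac{\sigma^{2}}{2}\phi(\lambda)$ vanishes, as it must when the last zero is eventually reached from below. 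Specialising to $p=q$ and letting the interval degenerate reduces the statement to the last-hitting-time formula of \cite{Baurdoux2009:lastexp:levy}, giving an independent check on the value of $\kappa_{\lambda}$.

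The main obstacle I anticipate is precisely the last-exit step: making the decomposition at $0$ rigorous and pinning down $\kappa_{\lambda}$. The point $0$ is regular or irregular for itself according to whether $\sigma^{2}>0$, so the final escape must be phrased through the local time and excursion measure at $0$ (equivalently, through the first-passage quantities of the reversed process), and the delicate part is verifying that the local-time normalisation implicit in $\uab(x,0)$ matches the one producing $1/\phi'(\lambda)$, uniformly across the regular and irregular cases. The time-reversal duality is exactly what renders this bookkeeping transparent and keeps the creeping and jumping contributions cleanly separated.
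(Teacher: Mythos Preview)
Your plan is essentially the paper's proof: reverse time at $e_{\lambda}$ so that the last visit to $0$ becomes the first hitting time $\tau^{\{0\}}$ for the dual process, split there by the strong Markov property, and identify the post-$\tau^{\{0\}}$ piece with the $(p+\lambda,q+\lambda)$-resolvent density at $y=0$ via Theorem~\ref{main:resol} in its dual form~\eqref{eqn:inproof:dual}.

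The one substantive difference is how $\kappa_{\lambda}$ is pinned down, which you rightly flag as the crux. You propose extracting it from excursion/local-time heuristics at $0$ and then confirming it by specialising to $p=q$ and matching \cite{Baurdoux2009:lastexp:levy}. The paper sidesteps this completely. The duality in Propositions~\ref{prop:inproof:1} and~\ref{prop:inproof:3} is stated with respect to Lebesgue measure, so the argument proceeds by integrating the desired expectation against a test function $f(x)\,dx$; under reversal the starting-point integral $\int dx$ becomes an integral $\int dz$ over the terminal value $X(e_{\lambda})=z$. After the Markov split at $\tau^{\{0\}}$ one obtains the factorised form
\[
\Bigl(\int_{\mathbb{R}} f(x)\,\uab(x,0)\,dx\Bigr)\Bigl(\int_{\mathbb{R}} \Em_{z}\bigl(e^{-\lambda\tau^{\{0\}}}\bigr)\,dz\Bigr),
\]
so $\kappa_{\lambda}$ is simply the second factor. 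That integral is evaluated in one line from Lemma~\ref{lem:mu}: decomposing $\tau^{\{0\}}$ as $\tau_{0}^{-}$ followed by an upward passage with transform $e^{-\phi(\lambda)y}$ gives $\lambda\int_{\mathbb{R}}\Em_{z}(e^{-\lambda\tau^{\{0\}}})\,dz=\widehat{\mu^{(\lambda)}}(\phi(\lambda))=\psi'(\phi(\lambda))=1/\phi'(\lambda)$. The regular/irregular dichotomy you were anticipating never arises, because the Lebesgue integration in $x$ is part of the duality machinery itself rather than a local-time normalisation to be matched.
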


As a complement, the following results are not hard to find.
\begin{prop}\label{prop:s0} For $x\in\mathbb{R}$,
\begin{align*}
\Pm_{x}(\sxp=0)=&\ \Pm_{x}(\tau_{0}^{+}>e_{\lambda})
= (1-e^{\phi(\lambda)x})\bo_{\{(x<0\}},\\
\Pm_{x}(\sxm=0)=&\ \Pm_{x}(\tau_{0}^{-}>e_{\lambda})
= 1- Z^{(\lambda)}(x)+ \frac{\lambda}{\phi(\lambda)} W^{(\lambda)}(x),\\
\Pm_{x}(\sxe=0)=&\ \Pm_{x}(\tau^{\{0\}}>e_{\lambda})
= 1- e^{\phi(\lambda)x}+ \frac{1}{\phi'(\lambda)}W^{(\lambda)}(x).
\end{align*}
\end{prop}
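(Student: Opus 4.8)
The plan is to reduce each of the three identities to a one-line passage-time computation already supplied by Proposition \ref{prop:levy}, using the elementary observation that a last-passage time equals $0$ precisely when the corresponding first-passage time exceeds the independent clock $e_{\lambda}$. Concretely, for each of $\sxp$, $\sxm$, $\sxe$ I would first argue the event-identity
\[
\{\sxp=0\}=\{\tau_{0}^{+}>e_{\lambda}\},\qquad
\{\sxm=0\}=\{\tau_{0}^{-}>e_{\lambda}\},\qquad
\{\sxe=0\}=\{\tau^{\{0\}}>e_{\lambda}\}
\]
up to $\Pm_{x}$-null sets, and then use the independence of $e_{\lambda}$ and $X$ to write, for any stopping time $\tau$,
\[
\Pm_{x}(\tau>e_{\lambda})=1-\Em_{x}\!\left[e^{-\lambda\tau};\tau<\infty\right],
\]
which follows from $\Pm(e_{\lambda}<t)=1-e^{-\lambda t}$ together with $\Pm(e_{\lambda}<\infty)=1$. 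Substituting the three transforms from Proposition \ref{prop:levy} then yields the stated right-hand sides.

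For $\sxp$ I would spell out the case distinction that produces the indicator $\bo_{\{x<0\}}$. When $x>0$ (and when $x=0$, by upward regularity of an SNLP) the set $\{0\le s<e_{\lambda}:X_{s}>0\}$ contains a right-neighbourhood of $0$ by right-continuity of the paths, so $\sxp>0$ a.s.\ and $\Pm_{x}(\sxp=0)=0$. When $x<0$ the event $\{\sxp=0\}$ is exactly $\{\tau_{0}^{+}>e_{\lambda}\}$, and since hitting level $0$ from $x<0$ is, by spatial homogeneity, hitting level $-x>0$ from the origin, the first identity in Proposition \ref{prop:levy} gives $\Em_{x}[e^{-\lambda\tau_{0}^{+}};\tau_{0}^{+}<\infty]=e^{-\phi(\lambda)(-x)}=e^{\phi(\lambda)x}$, whence $\Pm_{x}(\sxp=0)=1-e^{\phi(\lambda)x}$. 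The two cases combine into $(1-e^{\phi(\lambda)x})\bo_{\{x<0\}}$.

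The remaining two identities are then direct substitutions. For $\sxm$ I apply the second displayed formula of Proposition \ref{prop:levy} with $q=\lambda$ at the starting point $x$ to get $\Pm_{x}(\sxm=0)=1-Z^{(\lambda)}(x)+\frac{\lambda}{\phi(\lambda)}W^{(\lambda)}(x)$; for $\sxe$ I apply the hitting-time formula at level $-x$ (again by homogeneity) to obtain $\Em_{x}[e^{-\lambda\tau^{\{0\}}};\tau^{\{0\}}<\infty]=e^{\phi(\lambda)x}-\frac{1}{\phi'(\lambda)}W^{(\lambda)}(x)$ and hence $\Pm_{x}(\sxe=0)=1-e^{\phi(\lambda)x}+\frac{1}{\phi'(\lambda)}W^{(\lambda)}(x)$. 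I would note that these last two formulas remain valid for \emph{all} $x\in\mathbb{R}$ without an extra indicator, because $W^{(\lambda)}$ vanishes and $Z^{(\lambda)}\equiv1$ on the negative half-line, so both expressions collapse to $0$ for $x<0$, consistent with $\sxm,\sxe>0$ a.s.\ when $X$ starts strictly below $0$.

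The only genuinely delicate point is the justification of the event-identities, i.e.\ the handling of the convention $\sup\emptyset=0$: I must rule out the boundary scenario in which the relevant last visit occurs exactly at a positive time that accumulates to $0$, and in which $\{\tau=e_{\lambda}\}$ contributes. Both are disposed of by continuity of the law of $e_{\lambda}$ (so $\Pm_{x}(\tau=e_{\lambda})=0$) and by path regularity of the SNLP at the relevant level; I expect this bookkeeping, rather than any computation, to be the main obstacle, after which the substitutions from Proposition \ref{prop:levy} are immediate.
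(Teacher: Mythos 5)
Your route is the intended one: the paper gives no separate proof of Proposition \ref{prop:s0} (it is presented as a complement that is ``not hard to find''), and the middle members of the three identities already encode your key step --- the reduction of $\{\sxp=0\}$, $\{\sxm=0\}$, $\{\sxe=0\}$, up to $\Pm_{x}$-null sets, to $\{\tau_{0}^{+}>e_{\lambda}\}$, $\{\tau_{0}^{-}>e_{\lambda}\}$, $\{\tau^{\{0\}}>e_{\lambda}\}$, followed by $\Pm_{x}(\tau>e_{\lambda})=1-\Em_{x}[e^{-\lambda\tau};\tau<\infty]$ and substitution of the transforms from Proposition \ref{prop:levy}. The reduction, the null-set bookkeeping (continuity of the law of $e_{\lambda}$, upward regularity of an SNLP), and the case analysis producing the indicator $\bo_{\{x<0\}}$ are all correct.

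One side assertion is false, however. You claim that for $x<0$ both of the last two expressions collapse to $0$, ``consistent with $\sxm,\sxe>0$ a.s.\ when $X$ starts strictly below $0$''. This is true for $\sxm$ but wrong for $\sxe$: for $x<0$ one has $W^{(\lambda)}(x)=0$, so the third expression equals $1-e^{\phi(\lambda)x}>0$, and indeed $\Pm_{x}(\sxe=0)>0$, because starting from $x<0$ the process (which, having no positive jumps, must creep upward to reach $0$) may fail to reach $0$ before $e_{\lambda}$; in that case the set $\{0\le s<e_{\lambda}:X_{s}=0\}$ is empty and the convention $\sup\emptyset=0$ forces $\sxe=0$. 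The remark even contradicts the formula you derive two lines earlier, which assigns this event exactly the mass $1-e^{\phi(\lambda)x}$. Fortunately the remark is decorative rather than load-bearing: your actual argument for $\sxe$ --- the event identity $\{\sxe=0\}=\{\tau^{\{0\}}\ge e_{\lambda}\}$ up to null sets (valid for every $x$, including $x=0$, where in the unbounded-variation case regularity of the point $0$ makes both sides null, matching $W^{(\lambda)}(0)=0$) together with the hitting-time transform from Proposition \ref{prop:levy} --- holds for all $x\in\mathbb{R}$ and yields the stated formula. Strike or correct that sentence (only its $\sxm$ half is true) and your proof is complete.
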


Being absent of positive jumps,
$\{X(\sxm)<0\}=\{\sxm=e_{\lambda}\}=\{X(e_{\lambda})<0\}$, the first statement in Theorem \ref{main:s-} is not surprising. 
Similarly, applying Theorem \ref{main:resol} and Theorem \ref{main:s+} 
on the set $\{\sxp= e_{\lambda}\}=\{X(e_{\lambda})>0\}$, we could have the following joint distributions when the last positive time is caused by a jump.

\begin{cor}\label{cor:1}For $x\in[c,d]$, we have for $y\in(0,d)$
\begin{align}
&\ \Em_{x}\left[e^{-L(\sxp)}; X(\sxp-)\in dy,  \sxp< e_{\lambda}, 0<\sxp\leq \tdc\right]\non\\
=&\ \uab(x,y) \left( \mu^{(\lambda)}(dy)- \lambda  dy\right). 
\end{align}
\end{cor}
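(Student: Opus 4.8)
The plan is to exploit the decomposition of the event $\{0<\sxp\leq\tdc\}$ suggested in the paragraph preceding the statement, namely the split according to whether the last positive time equals $e_{\lambda}$ or occurs strictly before it:
\[
\{0<\sxp\leq\tdc\}=\{0<\sxp\leq\tdc,\ \sxp<e_{\lambda}\}\ \cup\ \{0<\sxp\leq\tdc,\ \sxp=e_{\lambda}\}.
\]
Since $\sxp\leq e_{\lambda}$ always holds, these two pieces are disjoint and exhaustive. First I would record that, because $X$ has no positive jumps and $e_{\lambda}$ is independent of $X$ (hence almost surely not a jump time), one has $X(e_{\lambda}-)=X(e_{\lambda})$ and the identification $\{\sxp=e_{\lambda}\}=\{X(e_{\lambda})>0\}$ up to a $\Pm_{x}$-null set. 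On this set $\sxp=e_{\lambda}$, $X(\sxp-)=X(e_{\lambda})$ and $L(\sxp)=L(e_{\lambda})$, so the constraint $0<\sxp\leq\tdc$ reduces to $e_{\lambda}\leq\tdc$.

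The next step is to evaluate the contribution of the second event, i.e. the quantity
\[
\Em_{x}\left[e^{-L(e_{\lambda})};\ X(e_{\lambda})\in dy,\ e_{\lambda}\leq\tdc\right],\qquad y\in(0,d).
\]
Conditioning on $e_{\lambda}=t$ and using independence, this equals $\lambda\int_{0}^{\infty}\Em_{x}\left[e^{-L(t)-\lambda t};\ X_{t}\in dy,\ t\leq\tdc\right]dt$. The key algebraic observation is that, since $\bo_{\{X_{s}\notin(a,b)\}}+\bo_{\{X_{s}\in(a,b)\}}=1$, adding $\lambda t$ to $L(t)$ merely shifts the occupation parameters,
\[
L(t)+\lambda t=\int_{0}^{t}\left((p+\lambda)\bo_{\{X_{s}\notin(a,b)\}}+(q+\lambda)\bo_{\{X_{s}\in(a,b)\}}\right)ds,
\]
which is exactly the functional in \eqref{def:localtime} with $(p,q)$ replaced by $(p+\lambda,q+\lambda)$. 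Hence the integral above is the resolvent of Theorem \ref{main:resol} computed for the shifted pair $(p+\lambda,q+\lambda)$, whose density is precisely $\uab(x,y)$. Therefore the $\{\sxp=e_{\lambda}\}$ contribution equals $\lambda\,\uab(x,y)\,dy$ on $(0,d)$.

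Finally I would subtract. By Theorem \ref{main:s+} the full mass over $\{0<\sxp\leq\tdc\}$ with $X(\sxp-)\in dy$ equals $\uab(x,y)\,\mu^{(\lambda)}(dy)$, so the part carried by $\{\sxp<e_{\lambda}\}$ is the difference
\[
\uab(x,y)\,\mu^{(\lambda)}(dy)-\lambda\,\uab(x,y)\,dy=\uab(x,y)\left(\mu^{(\lambda)}(dy)-\lambda\,dy\right),
\]
which is the asserted formula. I expect the only delicate point to be the event identification $\{\sxp=e_{\lambda}\}=\{X(e_{\lambda})>0\}$ together with $X(\sxp-)=X(e_{\lambda})$: this rests on spectral negativity (an upcrossing of level $0$ just before $e_{\lambda}$ cannot be reversed by a positive jump) and on $e_{\lambda}$ avoiding the countable jump set almost surely. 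Restricting to $y\in(0,d)$ conveniently sidesteps the atom of $\mu^{(\lambda)}$ at $0$ arising from creeping, which is handled separately in Theorem \ref{main:s+}.
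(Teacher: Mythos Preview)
Your proposal is correct and follows essentially the same route as the paper: the paper states just before Corollary~\ref{cor:1} that the result follows by ``applying Theorem~\ref{main:resol} and Theorem~\ref{main:s+} on the set $\{\sxp= e_{\lambda}\}=\{X(e_{\lambda})>0\}$,'' which is exactly your decomposition-and-subtraction argument, including the $(p,q)\mapsto(p+\lambda,q+\lambda)$ shift that turns the resolvent of Theorem~\ref{main:resol} into $\uab(x,y)$. Your write-up in fact spells out the steps more explicitly than the paper does.
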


In our main results, the resolvent density $u^{(p,q)}_{(a,b)}(x,y)$ plays predominate roles in all the formulas involved. 
Here, we provide a second way of studying $\sxp$ which could explain the scenarios for $dy$-terms. 
Similar conclusions can be derived for $\sxm$ and $\sxe$. 
Therefore, the events of creeping $0$ for the last times are always the heart of the problem.

\begin{rmk}\label{rmk:2}
Actually, on the set $\{0<\sxp<e_{\lambda}\}$, $X$ creeps $0$ continuously or jumps across level $0$ from somewhere above at the last time $\sxp$. 
For the later case, a negative jump takes place at  some time $t<e_{\lambda}$ such that $X$ fails to regain level $0$ within the rest of time $(t, e_{\lambda})$, and $t$ is then labeled $\sxp$ by definition. An application of the compensation formula yields a second formula
\begin{align*}
&\ \Em_{x}\left[e^{-L(\sxp)}; X(\sxp-)\in dy, 0<\sxp< e_{\theta}, \sxp\leq \tdc\right]\\
=&\ \int_{0}^{\infty} \Em_{x}\left[e^{-L(t)}\cdot 1_{\{e_{\lambda}-t< \tau_{0}^{+}\circ\theta_{t}\}};
X_{t-}\in dy, X_{t}\neq X_{t-}, t<e_{\lambda}\wedge\tdc\right] dt\\
=&\ \uab(x,y)\left(\int_{z>y} \left(1-e^{\phi(\lambda)(y-z)}\right)  \Pi(dz)  \right) dy.
\end{align*}
where $\theta_{\cdot}$ is the shifting operator of $X$.
With Proposition \ref{prop:iden:measure} applied, we have Corollary \ref{cor:1} proved and so is the $dy$ part in Theorem \ref{main:s+}.
\end{rmk}

We claim that the measures in Corollary \ref{cor:1} and Remark \ref{rmk:2} coincide on $(0, \infty)$, that is
\begin{prop}\label{prop:iden:measure} Let $\mu^{(\lambda)}(dy)$ be the measure defined in Lemma  \ref{lem:mu}, for $y>0$, 
\begin{equation}
\left(\mu^{(\lambda)}(dy)- \lambda  dy\right)=\nu(dy):=\left(\int_{z> y} \left(1-e^{\phi(\lambda)(y-z)}\right)  \Pi(dz)  \right) dy. \label{eqn:s+:iden}
\end{equation}
\end{prop}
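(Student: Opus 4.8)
The plan is to prove the identity by comparing one-sided Laplace transforms, since a Radon measure on $[0,\infty)$ whose transform is finite for large $s$ is uniquely determined by that transform. Because the asserted equality concerns only $y>0$, I would first split off the atom of $\mu^{(\lambda)}$ at the origin. By Lemma \ref{lem:mu} one has $\mu^{(\lambda)}(\{0\})=\frac{\sigma^{2}}{2}\phi(\lambda)$, so the Laplace transform of $\mu^{(\lambda)}$ restricted to $(0,\infty)$ is $\widehat{\mu^{(\lambda)}}(s)-\frac{\sigma^{2}}{2}\phi(\lambda)$, while the Lebesgue term contributes $\int_{0}^{\infty}e^{-sy}\lambda\,dy=\lambda/s$. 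Thus it suffices to show
\[
\frac{\phi(\lambda)(\psi(s)-\lambda)}{s(s-\phi(\lambda))}-\frac{\sigma^{2}}{2}\phi(\lambda)-\frac{\lambda}{s}=\widehat{\nu}(s).
\]
Writing $\phi=\phi(\lambda)$ and using the defining relation $\psi(\phi)=\lambda$, the left-hand side collapses after elementary algebra to $\frac{\phi\psi(s)-\psi(\phi)s}{s(s-\phi)}-\frac{\sigma^{2}}{2}\phi$; the $\lambda/s$ term is absorbed cleanly precisely because $\psi(\phi)=\lambda$.

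Next I would compute $\widehat{\nu}(s)$ directly. Interchanging the $dy$- and $\Pi(dz)$-integrations by Fubini (justified once integrability is checked) and carrying out the inner integration over $y\in(0,z)$ leaves
\[
\widehat{\nu}(s)=\int_{(0,\infty)}\left(\frac{1-e^{-sz}}{s}-\frac{e^{-sz}-e^{-\phi z}}{\phi-s}\right)\Pi(dz).
\]
A Taylor expansion shows the integrand is $\frac{\phi}{2}z^{2}+O(z^{3})$ as $z\downarrow0$ and stays bounded as $z\to\infty$, so it is $\Pi$-integrable by $\int_{(0,\infty)}(1\wedge z^{2})\,\Pi(dz)<\infty$; this estimate also legitimizes the Fubini step above.

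Finally I would match the two sides through the L\'evy--Khintchine formula. Substituting the representation of $\psi$ into $\frac{\phi\psi(s)-\psi(\phi)s}{s(s-\phi)}$, the Gaussian part produces exactly $\frac{\sigma^{2}}{2}\phi$ and the linear drift $\gamma$ cancels, leaving $\frac{\phi I(s)-sI(\phi)}{s(s-\phi)}$, where $I(\theta)=\int_{(0,\infty)}(e^{-\theta z}-1+\theta z\,\bo_{\{z\le1\}})\,\Pi(dz)$. The crucial simplification is that in the combination $\phi I(s)-sI(\phi)$ the compensator terms $\theta z\,\bo_{\{z\le1\}}$ cancel identically, so one may work with the convergent combined integrand $\phi(e^{-sz}-1)-s(e^{-\phi z}-1)$ without ever assuming $\int_{0}^{1}z\,\Pi(dz)<\infty$. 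It then remains to verify the pointwise algebraic identity
\[
\frac{\phi(e^{-sz}-1)-s(e^{-\phi z}-1)}{s(s-\phi)}=\frac{1-e^{-sz}}{s}-\frac{e^{-sz}-e^{-\phi z}}{\phi-s},
\]
a one-line manipulation over the common denominator $s(s-\phi)$. Integrating against $\Pi$ then yields $\widehat{\nu}(s)=\frac{\phi I(s)-sI(\phi)}{s(s-\phi)}$, which equals the reduced left-hand side, completing the proof.

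The step I expect to be the main obstacle is the bookkeeping of convergence rather than any single computation: because $\Pi$ may carry infinite total mass and need not have a finite first moment near the origin, none of $\int(e^{-sz}-1)\Pi(dz)$, $\int z\,\Pi(dz)$ may be separated out on its own. The entire argument hinges on preserving the $O(z^{2})$ cancellations intact---both in the Taylor estimate that justifies Fubini and in the vanishing of the compensator in $\phi I(s)-sI(\phi)$---so that every integral written down is manifestly convergent.
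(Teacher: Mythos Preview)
Your proposal is correct and follows essentially the same route as the paper: both compute $\widehat{\nu}(s)$ by Fubini, rewrite the result through the L\'evy--Khintchine representation, and identify it with $\widehat{\mu^{(\lambda)}}(s)-\frac{\sigma^{2}}{2}\phi(\lambda)-\frac{\lambda}{s}$. The only real difference is emphasis: the paper keeps the compensator $z\bo_{\{z\le1\}}$ throughout and lets the $\Pi$-integrals assemble into $\psi(s)$ and $\psi(\phi(\lambda))$ directly, whereas you first observe that the compensators cancel in the combination $\phi I(s)-sI(\phi)$ and then match a single pointwise identity; your version also makes the $O(z^{2})$ integrability explicit, which the paper leaves tacit.
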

\begin{proof}[Proof of Proposition \ref{prop:iden:measure}] Recall that the L\'evy-Khintchine formula is given by 
\[
\psi(s)=\frac{1}{2}\sigma^2s^2+ \gamma s+ \int_{(0,\infty)}\left(e^{-s x}-1+ s x \bo_{\{x\leq 1 \}}\right)\Pi(dx), \quad \text{for $s\geq0$}.
\]
Taking Laplace transform of $\nu(\cdot)$ in \eqref{eqn:s+:iden},  we have
\begin{align*}
&\ \widehat{\nu}(s)= \int_{0}^{\infty} e^{-sy} \left(\int_{z> y} \left(1-e^{\phi(\lambda)(y-z)}\right)  \Pi(dz)  \right) dy\\
=&\ \int_{z>0} \Pi(dz) \int_{0}^{z} \left(e^{-s y}- e^{(\phi(\lambda)-s)y-\phi(\lambda)z}\right) dy\\
=&\ \int_{z>0} \Pi(dz) \left((1-e^{-sz})(\frac{1}{s}-\frac{1}{s-\phi(\lambda)})+ (1-e^{-\phi(\lambda)z}) \frac{1}{s-\phi(\lambda)}\right)\\
=&\ \int_{z>0} \Pi(dz) \left(\frac{e^{-sz}-1+ sz \bo_{\{z\leq 1\}}}{s(s-\phi(\lambda))}\phi(\lambda)- \frac{e^{-\phi(\lambda)z}-1+ \phi(\lambda)z \bo_{\{z\leq 1\}}}{s-\phi(\lambda)}\right)\\
=&\ \phi(\lambda)  \frac{\psi(s)- \sigma^{2} s^{2}/2- \gamma s}{s(s-\phi(\lambda))}- \frac{\psi(\phi(\lambda))- \sigma^{2}\phi^{2}(\lambda)/2- \gamma \phi(\lambda)}{s-\phi(\lambda)}\\
=&\  \frac{\phi(\lambda)(\psi(s)-\lambda)}{s(s-\phi(\lambda))}- \frac{\lambda}{s}- \frac{\sigma^{2}}{2}\phi(\lambda),
\end{align*}
which equals to the Laplace transform of $\left(\mu^{(\lambda)}(dy)- \lambda  dy\right)\bo_{\{y>0\}}$  on the left side of \eqref{eqn:s+:iden} and this completes the proof. 
\end{proof}

Integrating with respective to $dy$ in Theorem \ref{main:s+} and \ref{main:s-} over their available domains gives the joint 
Laplace transforms 
of occupation times before last passage times.
\begin{cor}\label{cor:2} For $x\in[c,d]$, 
\begin{align*}
\Em_{x}\left[e^{-L(\sxp)}; 0<\sxp\leq \tdc\right]=&\ 
\int_{0-}^{d} \uab(x,y) \mu^{(\lambda)}(dy),\\
\Em_{x}\left[e^{-L(\sxm)}; 0<\sxm\leq \tdc\right]=&\ 
\frac{\lambda}{\phi(\lambda)} \uab(x,0)+ \lambda \int_{c}^{0} \uab(x,y) dy. 
\end{align*}
\end{cor}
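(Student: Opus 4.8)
The plan is to obtain both identities of Corollary \ref{cor:2} by integrating the disintegrated laws supplied by Theorems \ref{main:s+} and \ref{main:s-} over the full range of the terminal position, so essentially no new probabilistic input is needed beyond those two theorems together with the atom computation in Lemma \ref{lem:mu}.

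First I would handle the $\sxp$ identity. On the event $\{0<\sxp\leq\tdc\}$ the pre-jump position $X(\sxp-)$ necessarily lies in $[0,d)$: it stays below $d$ because $\sxp\leq\tdc\leq\tau_{d}^{+}$ keeps the path under $d$, and it is nonnegative because in the creeping scenario $X$ reaches $0$ continuously from above (so $X(\sxp-)=0$), while in the jumping scenario $X$ sits at a strictly positive level just before jumping down across $0$ for the last time. Hence the events $\{X(\sxp-)\in dy\}$, $y\in[0,d)$, partition $\{0<\sxp\leq\tdc\}$, and I would integrate the measure $\uab(x,y)\,\mu^{(\lambda)}(dy)$ from Theorem \ref{main:s+} over $y\in[0,d)$. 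The only delicate point is the endpoint $y=0$: by Lemma \ref{lem:mu} the measure $\mu^{(\lambda)}$ carries the atom $\mu^{(\lambda)}(\{0\})=\tfrac{\sigma^{2}}{2}\phi(\lambda)$, which reproduces exactly the creeping contribution $\tfrac{\sigma^{2}}{2}\phi(\lambda)\,\uab(x,0)$ isolated in the second display of Theorem \ref{main:s+}. Writing the lower limit as $0-$ is precisely the device recording this atom, so that $\int_{0-}^{d}\uab(x,y)\,\mu^{(\lambda)}(dy)$ sums the creeping mass at $0$ with the jumping mass on $(0,d)$, giving the first claimed formula.

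The $\sxm$ identity is even more direct. Here $X(\sxm)\in(c,0]$ on $\{0<\sxm\leq\tdc\}$, and Theorem \ref{main:s-} already splits this range into the absolutely continuous part on $(c,0)$, with density $\lambda\,\uab(x,y)$, and the creeping atom at $0$ equal to $\tfrac{\lambda}{\phi(\lambda)}\uab(x,0)$. I would simply add the two pieces, integrating $\lambda\,\uab(x,y)$ over $y\in(c,0)$ and appending the atom, which yields $\tfrac{\lambda}{\phi(\lambda)}\uab(x,0)+\lambda\int_{c}^{0}\uab(x,y)\,dy$ as stated.

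Since both statements are integrated forms of results already established, there is no genuine obstacle; the only thing to get right is the bookkeeping of the terminal position's range and the treatment of the atom at $0$. Concretely, I would verify that the partition by $X(\sxp-)$ (respectively $X(\sxm)$) is exhaustive, i.e. that no mass escapes $[0,d)$ (respectively $(c,0]$), and that interchanging the $dy$-integration with the expectation is legitimate, which follows from nonnegativity via Tonelli's theorem. Once these routine points are settled, both displays follow at once.
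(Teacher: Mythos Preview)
Your proposal is correct and matches the paper's own approach exactly: the paper states that Corollary~\ref{cor:2} follows by ``integrating with respect to $dy$ in Theorem~\ref{main:s+} and~\ref{main:s-} over their available domains,'' which is precisely the partition-and-integrate argument you outline, including the handling of the atom at $0$ via Lemma~\ref{lem:mu}. There is nothing to add.
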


Let $\lambda\to0+$, $\sxp$, $\sxm$  and $\sxe$ increase to 
\[
\sigma_{\infty}^{+}=\sup\{t>0, X_{t}>0\}, \ \
\sigma_{\infty}^{-}=\sup\{t>0, X_{t}<0\} \ \ \text{and}\ \ 
\sigma_{\infty}^{\{0\}}=\sup\{t>0, X_{t}=0\},
\]
respectively. Then we have 
\begin{cor} \label{cor:3}
If $\psi'(0)=0$, then $X$ oscillates, and $\sigma_{\infty}^{-}=\sigma_{\infty}^{+}=\sigma_{\infty}^{\{0\}}=\infty$.\\
If $\psi'(0)>0$, $X\to\infty$, then $\sigma_{\infty}^{+}=\infty$ 
and $\sigma_{\infty}^{\{0\}}=\sigma_{\infty}^{-}$ on $\{\sigma_{\infty}^{-}>0\}$
\[
\Em_{x}\left[e^{-L(\sigma_{\infty}^{-})}; 0<\sigma_{\infty}^{-}\leq \tdc\right]= \psi'(0) u^{(p,q)}_{(a,b)}(x,0).
\]
If $\psi'(0)<0$, $X\to-\infty$, then $\sigma_{\infty}^{-}=\infty$ and $\sigma_{\infty}^{\{0\}}\leq \sigma_{\infty}^{+}<\infty$  on the set $\{\sigma_{\infty}^{+}>0\}$
\begin{align*}
\Em_{x}\left[e^{-L(\sigma_{\infty}^{+})}; 0<\sigma_{\infty}^{+}\leq \tdc\right]=&\ \int_{0-}^{d} u^{(p,q)}_{(a,b)}(x,y)\mu(dy),\\
\Em_{x}\left[e^{-L(\sigma_{\infty}^{\{0\}})}; 0<\sigma_{\infty}^{\{0\}}\leq \tdc\right]=&\ \psi'(\phi(0)) u^{(p,q)}_{(a,b)}(x,0).
\end{align*}
where $\widehat{\mu}(s)=\frac{\phi(0)  \psi(s)}{s(s-\phi(0))}$ for $s>0$.
\end{cor}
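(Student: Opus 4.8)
The plan is to obtain Corollary \ref{cor:3} as the $\lambda\to0+$ limit of Theorems \ref{main:s+}, \ref{main:s-}, \ref{main:s=} and Corollary \ref{cor:2}. The starting observation is that $e_{\lambda}\uparrow\infty$ almost surely as $\lambda\to0+$, so that $\sxp\uparrow\sigma_{\infty}^{+}$, $\sxm\uparrow\sigma_{\infty}^{-}$ and $\sxe\uparrow\sigma_{\infty}^{\{0\}}$ monotonically. I would first recall the standard trichotomy for an SNLP: $X$ oscillates when $\psi'(0)=0$, drifts to $+\infty$ when $\psi'(0)>0$, and drifts to $-\infty$ when $\psi'(0)<0$; this immediately yields the qualitative statements about which of the three global last passage times are infinite. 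In the oscillating case $\limsup_{t}X_{t}=+\infty$ and $\liminf_{t}X_{t}=-\infty$, so $X$ revisits $(0,\infty)$, $(-\infty,0)$ and (by the absence of positive jumps, creeping upward) the level $0$ at arbitrarily large times, whence $\sigma_{\infty}^{+}=\sigma_{\infty}^{-}=\sigma_{\infty}^{\{0\}}=\infty$ and no Laplace transform is needed.

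For the two drift cases the strategy is to pass to the limit in the explicit right-hand sides, using the analytic facts $\phi(0)=0$ and $\phi'(0)=1/\psi'(\phi(0))$ when $\psi'(0)\geq0$, and $\phi(0)>0$ when $\psi'(0)<0$, together with the continuity of $q\mapsto W^{(q)}$ (hence $\uab(x,y)\to u^{(p,q)}_{(a,b)}(x,y)$). When $\psi'(0)>0$ one has $\lambda/\phi(\lambda)\to1/\phi'(0)=\psi'(0)$ and $\lambda\int_{c}^{0}\uab(x,y)\,dy\to0$, so Corollary \ref{cor:2} delivers the $\sigma_{\infty}^{-}$ formula; the companion identity $\sigma_{\infty}^{\{0\}}=\sigma_{\infty}^{-}$ on $\{\sigma_{\infty}^{-}>0\}$ holds because, having no positive jumps, $X$ leaves $(-\infty,0)$ for the last time by creeping continuously through $0$. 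When $\psi'(0)<0$ I would check from Lemma \ref{lem:mu} that $\widehat{\mu^{(\lambda)}}(s)\to\phi(0)\psi(s)/(s(s-\phi(0)))=\widehat{\mu}(s)$, so $\mu^{(\lambda)}\to\mu$, which turns the $\sxp$ identity of Corollary \ref{cor:2} into the $\sigma_{\infty}^{+}$ formula; and $1/\phi'(\lambda)\to1/\phi'(0)=\psi'(\phi(0))$ turns Theorem \ref{main:s=} into the $\sigma_{\infty}^{\{0\}}$ formula.

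It remains to justify interchanging the limit with the expectation on each left-hand side. Since $L$ is continuous and increasing, $e^{-L(\sxm)}\downarrow e^{-L(\sigma_{\infty}^{-})}$ (and likewise for the other two times) as $\sxm\uparrow\sigma_{\infty}^{-}$, and each integrand is bounded by $1$. I would establish $\bo_{\{0<\sxm\leq\tdc\}}\to\bo_{\{0<\sigma_{\infty}^{-}\leq\tdc\}}$ almost surely by splitting according to the position of $\sigma_{\infty}^{-}$ relative to $0$ and to $\tdc$: because $\sxm\leq\sigma_{\infty}^{-}$ for every $\lambda$ and $\sxm\uparrow\sigma_{\infty}^{-}$, the event $\{\sxm>0\}$ increases to $\{\sigma_{\infty}^{-}>0\}$, while $\{\sxm\leq\tdc\}$ stabilizes to $\{\sigma_{\infty}^{-}\leq\tdc\}$ off the boundary $\{\sigma_{\infty}^{-}=\tdc\}$. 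Dominated convergence then produces the claimed identities, and the same scheme applies to $\sigma_{\infty}^{+}$ and $\sigma_{\infty}^{\{0\}}$.

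The main obstacle I anticipate is precisely this convergence bookkeeping: one must simultaneously control the monotone passage $\sigma^{\bullet}_{e_{\lambda}}\uparrow\sigma^{\bullet}_{\infty}$, the almost-sure finiteness dictated by the sign of $\psi'(0)$, and the identification of the creeping events ($\sigma_{\infty}^{\{0\}}=\sigma_{\infty}^{-}$ respectively $\sigma_{\infty}^{\{0\}}\leq\sigma_{\infty}^{+}$) that rely on spectral negativity, so that the indicator events converge to the correct limiting sets and no mass is lost on the boundaries where the exit time $\tdc$ coincides with a global last passage time.
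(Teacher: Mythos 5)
Your proposal is correct and coincides with the paper's intended argument: the paper gives no separate proof of Corollary \ref{cor:3}, stating it directly as the $\lambda\to0+$ limit of Theorems \ref{main:s+}--\ref{main:s=} and Corollary \ref{cor:2}, which is precisely the limit passage you carry out, with the right analytic ingredients ($\phi(0)=0$ and $\lambda/\phi(\lambda)\to 1/\phi'(0)=\psi'(0)$ when $\psi'(0)>0$; $\phi(0)>0$, $\widehat{\mu^{(\lambda)}}(s)\to\widehat{\mu}(s)$ and $1/\phi'(\lambda)\to\psi'(\phi(0))$ when $\psi'(0)<0$; dominated convergence with the monotone coupling $e_{\lambda}\uparrow\infty$ on the left-hand sides). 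The one step worth tightening is the pathwise identity $\sigma_{\infty}^{\{0\}}=\sigma_{\infty}^{-}$ on $\{\sigma_{\infty}^{-}>0\}$: upward creeping only yields $X(\sigma_{\infty}^{-})=0$, i.e.\ $\sigma_{\infty}^{\{0\}}\geq\sigma_{\infty}^{-}$, and the reverse inequality additionally needs that $X$ cannot revisit $0$ after $\sigma_{\infty}^{-}$ --- immediate in the unbounded-variation case since $0$ is then regular for $(-\infty,0)$ so any later visit to $0$ would force a later passage below $0$, and in the bounded-variation case since hitting $0$ from above is almost surely impossible (no downward creeping, and no jump lands exactly on $0$).
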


With conclusions above, some other occupation times are also available, 
i.e. $(L(\sxp)-L(\tau^{+}_{0}))^{+}$, $(L(\sxe)-L(\tau_{0}^{-}))^{+}$ and  $(L(\sxp)-L(\tau^{\{0\}}))^{+}$,
by applying the strong Markov  property of $X$. 
Similar questions on the differences of times are studied by \cite{NokChiu2005:passagetime:levy} and \cite{Baurdoux2009:lastexp:levy}.
With time reversal approach applied, we could also have the distributions of differences between last times. The following corollaries can be proved following the exact procedures.

\begin{cor}\label{cor:4} For $y>0$ and $z\neq 0$, we have the joint distributions
\begin{align*}
&\ \Em_{x}\left[e^{-L(\sxp)}; X(\sxp)=0, -X(e_{\lambda})\in dy, 0<\sxp\leq\tdc\right]\\
=&\  \lambda\cdot\uab(x,0)\cdot \frac{\sigma^{2}}{2} \left(W^{(\lambda)\prime}(y)-\phi(\lambda)W^{(\lambda)}(y)\right)\cdot dy,\\
&\ \Em_{x}\left[e^{-L(\sxm)}; X(\sxm)=0, X(e_{\lambda})\in dy, 0<\sxm\leq\tdc\right]\\
=&\ \lambda\cdot\uab(x,0)\cdot e^{-\phi(\lambda)y}\cdot dy,\\
&\ \Em_{x}\left[e^{-L(\sxe)}; X(e_{\lambda})\in dz, 0<\sxe\leq\tdc\right]\\
=&\ \lambda\cdot\uab(x,0)\cdot \left(e^{\phi(q)z}-\frac{1}{\phi'(\lambda)} W^{(\lambda)}(z)\right)\cdot dz.
\end{align*}

In addition, the Laplace transform of the difference between last passage times 
are given by, for $x,y>0$ with $x\neq y$ and $d>z>0$
\begin{align*}
&\ \Em\left[e^{L(\sxe)-L(\sxp)}; X_{\sxp-}\in dz, -X_{\sxp}\in dy, -X_{e_{\lambda}}\in dx, 0<\sxe<\sxp\leq \tdc\right]\\
=&\ \lambda\cdot (e^{-\phi(\lambda)y}W^{(\lambda)}(x)-W^{(\lambda)}(x-y)) \cdot \frac{\mathcal{W}^{(p+\lambda,q+\lambda)}_{(a,b)}(d,z)}{\mathcal{W}^{(p+\lambda,q+\lambda)}_{(a,b)}(d,0)}\\
&\ \quad\times \left(\frac{W^{(\lambda)}(-c)}{W^{(\lambda)}(d-c)}W^{(\lambda)}(d)-W(0)\right)\cdot \,dx\,dy\,\Pi(dz+y).
\end{align*}
\end{cor}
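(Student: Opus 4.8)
The plan is to read off the path geometry forced by spectral negativity on the event $\{0<\sxe<\sxp\le\tdc\}$ with the prescribed marginals, and then to peel the trajectory into successive phases joined by the strong Markov property, the compensation formula, and the time-reversal duality already exploited in the preceding proofs. Since $X$ has no positive jumps, the path must look as follows: starting from $X_{0}=0$ it makes excursions inside $[c,d]$ with a final return to $0$ at $\sxe>0$; it then performs one excursion that stays strictly in $(0,d)$ and attains $z=X_{\sxp-}$; at $\sxp$ it jumps downward across $0$ from $z$ to $-y$, a jump of size $z+y$ (so $\Pi$ must appear); and on $(\sxp,e_{\lambda})$ it stays strictly negative, ending at $X_{e_\lambda}=-x$. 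The key simplification is that $e^{L(\sxe)-L(\sxp)}=e^{-(L(\sxp)-L(\sxe))}$, so the occupation weight only sees the positive excursion on $(\sxe,\sxp)$: the phase before $\sxe$ carries no $L$-weight, while the independent horizon $e_{\lambda}$ contributes, through lack of memory, a discount factor $\lambda$ to each phase.

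I would organize the computation around the jump time $\sxp=:T$, which, unlike the last-passage times, becomes a stopping time once it is selected through the compensation (L\'evy system) formula. Applying that formula to the jump from $z$ to $-y$ produces the factor $\Pi(dz+y)$ and separates the contributions before $T-$ from those after $T$. For the post-jump phase the strong Markov property together with the memorylessness of $e_{\lambda}$ restart the process afresh at $-y$, requiring it to avoid $(0,\infty)$ up to an independent exponential horizon while ending at $-x$; this is governed by the $\lambda$-resolvent of $X$ killed at $\tau_{0}^{+}$, which I would obtain by letting the lower boundary tend to $-\infty$ in the two-sided killed resolvent of Proposition \ref{prop:levy} and invoking the asymptotics \eqref{scale:app}. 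The limit is exactly $\lambda\bigl(e^{-\phi(\lambda)y}W^{(\lambda)}(x)-W^{(\lambda)}(x-y)\bigr)\,dx$, which accounts for the first bracket together with the differential $dx$.

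For the pre-jump phase I would invoke the time-reversal/last-exit decomposition at the last visit $\sxe$ to $0$, the device the paper uses to linearize the non-Markovian last-passage times. This splits the pre-$T$ contribution into (i) a Green's-function factor for $X$ started at $0$, $\lambda$-discounted and killed at $\tdc$, evaluated at $(0,0)$, which by Theorem \ref{main:resol} specialized to $p=q=\lambda$ equals $u^{(\lambda,\lambda)}_{(a,b)}(0,0)=\frac{W^{(\lambda)}(-c)}{W^{(\lambda)}(d-c)}W^{(\lambda)}(d)-W(0)$ (the last bracket, valid in both the bounded- and unbounded-variation regimes since $W^{(\lambda)}(0)=W(0)$), and (ii) an $L$-weighted excursion factor describing the final excursion above $0$ that reaches $z$ just before jumping while staying in $[c,d]$. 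Because this excursion is simultaneously $\lambda$-discounted and $L$-weighted, its parameters shift from $(p,q)$ to $(p+\lambda,q+\lambda)$, and matching it against the first-passage quantities of Proposition \ref{prop:ronnie} yields the ratio $\frac{\mathcal{W}^{(p+\lambda,q+\lambda)}_{(a,b)}(d,z)}{\mathcal{W}^{(p+\lambda,q+\lambda)}_{(a,b)}(d,0)}$. Multiplying the four phase-contributions and reconciling the differentials $dx$, $dy$ and $\Pi(dz+y)$ then produces the stated formula.

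The hard part will be step (ii): making the last-exit/excursion decomposition at $\sxe$ rigorous and fixing its normalization. One must justify the product factorization across the non-stopping time $\sxe$ (this is precisely where the reversal argument does the real work), correctly balance the excursion-measure normalization of the local time at $0$ against the Green's function $u^{(\lambda,\lambda)}_{(a,b)}(0,0)$, and verify that the $L$-weighted excursion attaining $z$ contributes exactly $\frac{\mathcal{W}^{(p+\lambda,q+\lambda)}_{(a,b)}(d,z)}{\mathcal{W}^{(p+\lambda,q+\lambda)}_{(a,b)}(d,0)}$ rather than some other ratio of the auxiliary functions. The remaining bookkeeping, namely the compensation formula for the jump and the post-jump resolvent, is routine given the toolkit assembled in Sections 2 and 3.
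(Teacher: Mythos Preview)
Your forward-time decomposition via the compensation formula at the jump time, followed by a last-exit decomposition at $\sxe$, is a genuinely different route from the one the paper takes, although both are correct in principle and produce the same four factors.

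The paper does a single global time-reversal at $e_{\lambda}$ (exactly the dual argument of the main proofs, via Propositions \ref{prop:inproof:1}--\ref{prop:inproof:3}) and thereby converts \emph{both} last-passage times into first-passage stopping times for the dual process simultaneously: $\sxp\leftrightarrow\widetilde{\tau}_{0}^{+}$, $\sxe\leftrightarrow\widetilde{\tau}^{\{0\}}$, and $L(\sxp)-L(\sxe)=L(\tau_{0}^{-})\circ\theta_{\tau_{0}^{+}}$ under $\widehat{\Pm}$. One application of the strong Markov property at $\tau_{0}^{+}$ (under $\widehat{\Pm}_{-x}$) then factors the expectation into three pieces that are read off directly: the overshoot law $\Em_{x}\bigl(e^{-\lambda\tau_{0}^{-}};X_{\tau_{0}^{-}-}\in dy,-X_{\tau_{0}^{-}}\in dz\bigr)=R^{(\lambda)}(x,dy)\,\Pi(dz+y)$ from Proposition \ref{prop:levy}; the $L$-weighted two-sided exit $\widehat{\Em}_{z}\bigl[e^{-L(\tau_{0}^{-})};\tau_{0}^{-}<e_{\lambda}\wedge\tau_{d}^{+}\bigr]$, which the paper identifies as the $\mathcal{W}^{(p+\lambda,q+\lambda)}_{(a,b)}$ ratio; and the dual resolvent at $0$ via \eqref{eqn:inproof:dual}, which gives the bracket $u^{(\lambda)}(0,0)$. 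No excursion theory, no normalization issues.

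Your plan reaches the same destination by a harder road. The post-jump resolvent and the compensation step are routine (indeed, they mirror Remark \ref{rmk:2}), but the pre-jump step (ii) is exactly as delicate as you anticipate: the product factorization across the non-stopping time $\sxe$ and the matching of excursion-measure normalization against $u^{(\lambda,\lambda)}_{(a,b)}(0,0)$ are real work, and you would effectively be reproving a last-exit formula tailored to the $L$-weighted, $[c,d]$-killed setting. The paper's device eliminates that step entirely by turning $\sxe$ into the \emph{first} hitting time $\tau_{0}^{-}$ for the dual process after $\tau_{0}^{+}$, so that the ordinary strong Markov property suffices. If you want to streamline your argument, reverse time once at $e_{\lambda}$ before doing anything else.
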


We conclude the section by replicating the results in \cite{Baurdoux2009:lastexp:levy}  in which $p=q$. For this case, $\mathcal{W}^{(p+\lambda,q+\lambda)}_{(a,b)}(x,y)=W^{(p+\lambda)}(x-y)$,  $\forall x,y\in\mathbb{R}$.  Specifically, they are demonstrated by the following examples.

\begin{exam}\label{exam:c+d} For $p\geq0$ and $x\in\mathbb{R}$,
\begin{align}
 \Em_{x}\left[e^{-p \sxp}\right]
=&\  \frac{p \phi(\lambda) \phi'(p+\lambda) }{(\phi(p+\lambda)-\phi(\lambda))\phi(p+\lambda)} e^{\phi(p+\lambda)x}  \non\\
&\ \quad-\left(e^{\phi(\lambda)x} Z^{(p)}_{\phi(\lambda)}(x)- \frac{p}{p+\lambda} Z^{(p+\lambda)}(x)-\frac{\lambda}{p+\lambda}\right), 
\label{eqn:exam:1}\\
 \Em_{x}\left[e^{-p \sxm} \right]
=&\ \phi'(p+\lambda) e^{\phi(p+\lambda)x} \left( \frac{\lambda}{\phi(\lambda)}- \frac{\lambda}{\phi(p+\lambda)}\right)
+\frac{\lambda}{p+\lambda} Z^{(p+\lambda)}(x)- \frac{\lambda}{\phi(\lambda)} W^{(p+\lambda)}(x) \non\\
&\ \quad + \left(1- Z^{(\lambda)}(x)+ \frac{\lambda}{\phi(\lambda)} W^{(\lambda)}(x)\right),
\label{eqn:exam:2}\\
 \Em_{x}\left[e^{-p \sxe}\right] 
=&\ \left(1- e^{\phi(\lambda)x}+ \frac{1}{\phi'(\lambda)}W^{(\lambda)}(x)\right)+ \frac{1}{\phi'(\lambda)} \left(e^{\phi(p+\lambda)x} \phi'(p+\lambda)- W^{(p+\lambda)}(x)\right).
\label{eqn:exam:3}
\end{align}
\end{exam}

\begin{proof}[Proof of Example \ref{exam:c+d}] 
It follows directly from Theorem  \ref{main:s=} and Corollary \ref{cor:2} that 
\begin{align*}
&\ \Em_{x}\left[e^{-p \sxe}; 0<\sxe\leq \tau_{d}^{+}\wedge\tau_{c}^{-}\right]
= \frac{1}{\phi'(\lambda)} \left( \frac{W^{(p+\lambda)}(x-c)}{W^{(p+\lambda)}(d-c)}W^{(p+\lambda)}(d)- W^{(p+\lambda)}(x)\right),\\
&\ \Em_{x}\left[e^{- p \sxm}; 0<\sxm\leq\tau_{d}^{+}\wedge\tau_{c}^{-}\right]\non\\
=&\ \frac{\lambda}{\phi(\lambda)}\left(\frac{W^{(p+\lambda)}(x-c)}{W^{(p+\lambda)}(d-c)}W^{(p+\lambda)}(d)- W^{(p+\lambda)}(x)\right)\non\\
&\quad -\frac{\lambda}{p+\lambda}\left(\frac{W^{(p+\lambda)}(x-c)}{W^{(p+\lambda)}(d-c)} \left(Z^{(p+\lambda)}(d)-Z^{(p+\lambda)}(d-c)\right)- \left(Z^{(p+\lambda)}(x)-Z^{(p+\lambda)}(x-c)\right)\right).
\end{align*}
Letting $d\to\infty$, $c\to-\infty$, with the limiting identities \eqref{scale:app} of the scale functions, we replicate the identities \eqref{eqn:exam:2} and \eqref{eqn:exam:3} with additional terms  from Proposition \ref{prop:s0}.

While for equation \eqref{eqn:exam:1}, taking the Laplace transform yields
\begin{align*}
\int_{0}^{\infty} e^{-s u} W^{(p+\lambda)}*\mu^{(\lambda)}(u) du
=&\ \frac{\phi(\lambda)(\psi(s)-\lambda)}{(\psi(s)-p-\lambda)(s-\phi(\lambda))s}\\
=&\ \frac{\psi(s)-\lambda}{(\psi(s)-p-\lambda)(s-\phi(\lambda))}- \frac{p}{p+\lambda} \frac{\psi(s)}{(\psi(s)-p-\lambda) s}- \frac{\lambda}{s(p+\lambda)},\\
W^{(p+\lambda)}*\mu^{(\lambda)}(u) 
=&\ e^{\phi(\lambda) u} Z^{(p)}_{\phi(\lambda)}(u)- \frac{p}{p+\lambda} Z^{(p+\lambda)}(u)- \frac{\lambda}{p+\lambda},
\end{align*}
for $u\geq0$. Thus we have from Theorem \ref{main:s+}, for $x\in[c,d]$
\begin{align*}
\Em_{x}\left[e^{-p \sxp}; 0<\sxp\leq \tau_{d}^{+}\wedge\tau_{c}^{-}\right]
=&\ 
\frac{W^{(p+\lambda)}(x-c)}{W^{(p+\lambda)}(d-c)}
\left(e^{\phi(\lambda) d} Z^{(p)}_{\phi(\lambda)}(d)- \frac{p}{p+\lambda} Z^{(p+\lambda)}(d)- \frac{\lambda}{p+\lambda}\right)\\
&\quad -\left(e^{\phi(\lambda) x} Z^{(p)}_{\phi(\lambda)}(x)- \frac{p}{p+\lambda} Z^{(p+\lambda)}(x)- \frac{\lambda}{p+\lambda}\right)\cdot \bo_{\{x\geq 0\}}
\end{align*}
With an additional term of $\Pm_{x}(\sxp=0)$ from Proposition \ref{prop:s0} and applying the limiting identity \eqref{scale:app}, we have \eqref{eqn:exam:1} holds for $x\in\mathbb{R}$. 
\end{proof}

\section{Proof of main results}
This section will be dedicated to showing proofs for our main results discussed in the previous section.

\begin{proof}[Proof of Theorem \ref{main:resol}] 
To find the resolvent measure of $X$ killed at exiting $[c,d]$, we apply the strong Markov property and Propositions \ref{prop:ronnie} to Propositions \ref{prop:Guerin}.

Firstly, by applying the strong Markov property at $\tau_{d}^{+}$, we have 
\begin{align}
&\ \int_0^{\infty} \Em_{x}\left[e^{-L(t)}; X(t)\in dy, t\leq \tau_{d}^{+}\right] dt\non\\
=&\ \int_0^{\infty} \Em_{x}\left[e^{-L(t)}; X(t)\in dy\right] dt- \int_0^{\infty} \Em_{x}\left[e^{-L(t)}; X(t)\in dy, \tau_{d}^{+}<t\right] dt\non\\
=&\ \int_0^{\infty} \Em_{x}\left[e^{-L(t)}; X(t)\in dy\right] dt- \frac{\mathcal{H}^{(p,q)}_{(a,b)}(x)}{\mathcal{H}^{(p,q)}_{(a,b)}(d)} \times \int_0^{\infty} \Em_{d}\left[e^{-L(t)}; X(t)\in dy\right] dt\non\\
=&\ \left(\frac{\mathcal{H}^{(p,q)}_{(a,b)}(x)}{\mathcal{H}^{(p,q)}_{(a,b)}(d)}\wab(d,y)- W^{(p,q)}_{(a,b)}(x,y)\right) dy, \quad\quad\quad \text{ for $x,y\leq d$. }\label{eqn:inproof:resol:1}
\end{align}

Observe that $\tau^{\{c\}}$, the first hitting time of level $c$ in Proposition \ref{prop:levy}, is a stopping time, and more interestingly, as noticed in \cite{Ivanovs2012:occupationdensity:map}, for an SNLP,
\[
\{\tau_{c}^{-}\leq \tau_{d}^{+}<\infty\}=\{\tau^{\{c\}}\leq \tau_{d}^{+}<\infty\}.
\]
Applying the strong Markov  property at $\tau^{\{c\}}$, we have for $x\leq d$
\begin{align*}
&\ \Em_{x}\left[e^{-L(\tau_{d}^{+})}; \tau_{d}^{+}<\infty\right]
= \Em_{x}\left[e^{-L(\tau_{d}^{+})}; \tau_{d}^{+}\leq \tau_{c}^{-}\right]
+ \Em_{x}\left[e^{-L(\tau_{d}^{+})};  \tau_{c}^{-}\leq \tau_{d}^{+}<\infty\right]\\
=&\ \Em_{x}\left[e^{-L(\tau_{d}^{+})}; \tau_{d}^{+}\leq \tau_{c}^{-}\right]
+ \Em_{x}\left[e^{-L(\tau_{d}^{+})};  \tau^{\{c\}}\leq \tau_{d}^{+}<\infty\right]\\
=&\ \Em_{x}\left[e^{-L(\tau_{d}^{+})}; \tau_{d}^{+}\leq \tau_{c}^{-}\right]+ \Em_{x}\left[e^{-L(\tau^{\{c\}})}; \tau^{\{c\}}\leq \tau_{d}^{+}\right] \Em_{c}\left[e^{-L(\tau_{d}^{+})}; \tau_{d}^{+}<\infty\right].
\end{align*}
Plugging Proposition \ref{prop:ronnie} into the equation above gives
\begin{equation}
\Em_{x}\left[e^{-L(\tau^{\{c\}})}; \tau^{\{c\}}\leq \tau_{d}^{+}\right]= \frac{\mathcal{H}^{(p,q)}_{(a,b)}(x)}{\mathcal{H}^{(p,q)}_{(a,b)}(c)}- \frac{\mathcal{H}^{(p,q)}_{(a,b)}(d)}{\mathcal{H}^{(p,q)}_{(a,b)}(c)}\frac{\wab(x,c)}{\wab(d,c)}, \quad \text{for $x\leq d$}.\label{eqn:inproof:resol:2}
\end{equation}

Finally, substituting \eqref{eqn:inproof:resol:2} into \eqref{eqn:inproof:resol:1} and  applying the strong Markov  property at $\tau^{\{c\}}$ again, for $x, y\in [c,d]$, we have
\begin{align*}
U^{(p,q)}_{(a,b)}&\ (x,dy):= \int_0^{\infty} \Em_{x}\left[e^{-L(t)}; X(t)\in dy, t\leq \tdc\right] dt\\
=&\  \int_0^{\infty} \left(\Em_{x}\left[e^{-L(t)}; X_{t}\in dy, t\leq \tau_{d}^{+}\right]
-\Em_{x}\left[e^{-L(t)}; X_{t}\in dy, \tau_{c}^{-}<t\leq \tau_{d}^{+}\right]\right) dt\\
=&\  \int_0^{\infty} \left(\Em_{x}\left[e^{-L(t)}; X_{t}\in dy, t\leq \tau_{d}^{+}\right]
-\Em_{x}\left[e^{-L(t)}; X_{t}\in dy, \tau^{\{c\}}<t\leq \tau_{d}^{+}\right]\right) dt\\
=&\ \int_0^{\infty} \Em_{x}\left[e^{-L(t)}; X(t)\in dy, t\leq \tau_{d}^{+}\right] dt\\
&\ -\Em_{x}\left[e^{-L(\tau^{\{c\}})}; \tau^{\{c\}}\leq\tau_{d}^{+}\right]\times \int_0^{\infty} \Em_{c}\left[e^{-L(t)}, X(t)\in dy, t\leq \tau_{d}^{+}\right] dt\\
=&\ \left(\frac{\wab(x,c)}{\wab(d,c)} \wab(d,y) - \wab(x,y)\right) dy,
\end{align*}
where the fact that for $y>c$, $\{\tau_{c}^{-}<t\}=\{\tau^{\{c\}}<t\}$ on the set $\{X(t)=y\}$ is used in the third identity and $W^{(p,q)}_{(a,b)}(c,y)=0$ is used in the last line. 
\end{proof}

\begin{proof}[Proof of Lemma \ref{lem:mu}] The Laplace transform of $\lambda$-capacity measure $\mu^{(\lambda)}(dy)$ is derived by applying Proposition \ref{prop:levy} and
the change of measure.

Firstly, for $u,v\geq0$ with $r=u-\psi(v)>0$ and large $t>0$, we have
\begin{align*}
&\ \int_{0}^{\infty} e^{-t x} \Em_{x}\left[e^{-u \tau_{0}^{-}+ v X(\tau_{0}^{-})}; \tau_{0}^{-}<\infty\right] dx\\
=&\ \int_{0}^{\infty} e^{-(t-v) x} \left(Z_{v}^{(r)}(x)- \frac{r}{\phi_{v}(r)}W_{v}^{(r)}(x)\right) dx\\
=&\ \frac{\psi_{v}(t-v)}{(t-v)(\psi_{v}(t-v)-r)}- \frac{r}{\phi_{v}(r)} \frac{1}{\psi_{v}(t-v)-r}\\
=&\ \frac{(\psi(t)-\psi(v))\cdot(\phi(u)-v)- (u-\psi(v))\cdot(t-v)}{(t-v)\cdot (\psi(t)-u)\cdot (\phi(u)-v)}.
\end{align*}
Then the identity holds for $u,v,t>0$ by analytical extension. Particularly, 
\[
\int_{0}^{\infty} \Em_{x}\left[e^{-u \tau_{0}^{-}+ v X(\tau_{0}^{-})}; \tau_{0}^{-}<\infty\right] dx=\frac{v(u-\psi(v))- \psi(v)(\phi(u)-v)}{u\cdot v\cdot(\phi(u)-v)}.
\]
Therefore for $\lambda,s>0$, we have
\begin{align}
&\ \widehat{\mu^{(\lambda)}}(s):= \lambda \int_{\mathbb{R}}  \Em_{x}\left[e^{-\lambda \tau_{0}^{-}+ s X(\tau_{0}^{-})}; \tau_{0}^{-}<\infty\right] dx\non\\
=&\ \frac{\lambda}{s}+ \frac{\lambda-\psi(s)}{\phi(\lambda)-s}- \frac{\psi(s)}{s}
=\frac{\phi(\lambda)\cdot(\psi(s)-\lambda)}{s\cdot(s-\phi(\lambda))},
\end{align}
which gives the formula for $\mu^{(\lambda)}(dy)$ in the Lemma. This completes the proof.
\end{proof}


The proofs of our main results are motivated by the fact that the last times are dual to the first times by time reversal at $e_{\lambda}$ and greatly rely on the dual argument of an SNLP. 
It is well known that, the analytic notion of duality is related to the probabilistic notion of  time reversal for a Markov process. 
Fortunately, things become much simpler for an SNLP. 
In what follows, $\widehat{X}=-X$ denotes the dual process  of $X$ and $\widetilde{X}=(X_{(t-s)-}, 0\leq s\leq t)$ is the time-reversed process for some fixed time $t$. 
In the mathematical notations, a hat $\widehat{\ }$ is used over the existing notations for the characteristics of the dual process, and $\widetilde{\ }$ for those of the reversed process. 
For instance, $\widehat{\Pm}$ stands for the law of $-X$. For every $x\in\mathbb{R}$, $\widehat{\Pm}_{x}$ denotes the law of $x+X$ under $\widehat{\Pm}$, that is the law of $x-X$ under $\Pm$ and also the law of $\widehat{X}$ under $\Pm_{-x}$.  
Before moving onto the main proofs, we need to present the following propositions first which can be found from Chapter III in \cite{Bertoin96:book}.

\begin{prop} \label{prop:inproof:1}
Let $f$ and $g$ be two nonnegative measurable functions, we have for every $t\geq0$, 
\[
\int \Pm_tf(x) g(x) dx= \int f(y) \widehat{\Pm}_t g(y) dy.
\]
\end{prop}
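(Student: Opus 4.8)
The plan is to reduce the identity to the spatial homogeneity of the L\'evy process $X$ together with the translation invariance of Lebesgue measure; indeed, for a spatially homogeneous Markov process the adjoint of the transition semigroup with respect to $dx$ is precisely the semigroup of the dual $\widehat{X}=-X$, and the claimed formula is the weak statement of this fact. Writing $p_t(dz)$ for the common law of the increment $X_t-X_0$ under $\Pm_x$ (independent of $x$ by stationarity), the homogeneity yields the kernel representations
\[
\Pm_t f(x)=\Em_x\big[f(X_t)\big]=\int_{\mathbb{R}} f(x+z)\, p_t(dz),
\]
while, recalling that under $\widehat{\Pm}_y$ the coordinate process is distributed as $y-X$ with $X$ under $\Pm_0$,
\[
\widehat{\Pm}_t g(y)=\int_{\mathbb{R}} g(y-z)\, p_t(dz).
\]

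First I would substitute the first representation into the left-hand side, obtaining the double integral $\int_{\mathbb{R}}\int_{\mathbb{R}} f(x+z)\,g(x)\,dx\,p_t(dz)$; since $f,g\ge 0$, Tonelli's theorem permits interchanging the two integrations. Next, for each fixed $z$ I would apply the change of variable $y=x+z$, which by translation invariance of Lebesgue measure leaves $dx=dy$ unchanged and turns the inner integral into $\int_{\mathbb{R}} f(y)\,g(y-z)\,dy$. Finally, swapping the order of integration once more and recognising $\int_{\mathbb{R}} g(y-z)\,p_t(dz)=\widehat{\Pm}_t g(y)$ delivers the right-hand side $\int_{\mathbb{R}} f(y)\,\widehat{\Pm}_t g(y)\,dy$, which finishes the argument.

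There is no genuine analytic obstacle here: the only ingredient is Tonelli's theorem, which applies immediately by nonnegativity, so no integrability hypothesis beyond measurability is required. The one point demanding care is purely bookkeeping, namely that the reflection built into $\widehat{X}=-X$ must match the sign in the change of variable; concretely, one must use the stated identification of $\widehat{\Pm}_y$ with the law of $y-X$ so that the dual increment law is the reflection $p_t(-\cdot)$ of the law of $X$, and it is exactly this reflection that makes the substitution $y=x+z$ close the computation back onto $\widehat{\Pm}_t$. The result is thus a direct consequence of homogeneity and is recorded (as noted in the text) in Chapter III of Bertoin.
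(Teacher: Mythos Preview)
Your proof is correct and is essentially the standard argument: spatial homogeneity gives $\Pm_t f(x)=\int f(x+z)\,p_t(dz)$ and $\widehat{\Pm}_t g(y)=\int g(y-z)\,p_t(dz)$, and Tonelli plus the translation $y=x+z$ closes the identity. The paper does not supply its own proof of this proposition but simply cites Chapter~III of Bertoin~\cite{Bertoin96:book}, where precisely this computation appears; so your argument matches the referenced source.
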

\begin{prop} \label{prop:inproof:2}
For every $t\geq0$, the reversed process $(X_{(t-s)-}-X_t, 0\leq s\leq t)$ and the dual process $(\widehat{X}_s, 0\leq s\leq t)$ has the same law under $\Pm$.
\end{prop}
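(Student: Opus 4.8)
The plan is to exploit the defining property of a L\'evy process---stationary and independent increments---in order to reduce the claim to a matching of finite-dimensional distributions, and then to verify that matching increment-by-increment. Write $Y_s := X_{(t-s)^-} - X_t$ for the reversed process and $Z_s := \widehat{X}_s = -X_s$ for the dual process, both indexed by $s \in [0,t]$. Since both $Y$ and $Z$ have c\`adl\`ag paths, it suffices to show that they share the same finite-dimensional distributions along a dense set of deterministic times, and then to invoke right-continuity of the paths to upgrade this to equality in law on the path space under $\Pm$.

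First I would record the elementary fact that, for a L\'evy process, any deterministic time is almost surely a continuity point: since the jump times form an a.s. countable random set and $X$ has no fixed discontinuities, $\Pm(X_{u^-} \neq X_u) = 0$ for each fixed $u$. In particular $X_{t^-} = X_t$ a.s., so $Y_0 = 0 = Z_0$ a.s.; more usefully, the left limits appearing in the definition of $Y$ may be replaced by the values themselves whenever the process is evaluated at a fixed time. Next I would fix times $0 = s_0 < s_1 < \cdots < s_n \leq t$ and compare the two increment vectors. The increments of the dual process are
\[
Z_{s_k} - Z_{s_{k-1}} = -\bigl(X_{s_k} - X_{s_{k-1}}\bigr),
\]
while those of the reversed process are, using the a.s. continuity at the fixed times $t-s_k$,
\[
Y_{s_k} - Y_{s_{k-1}} = X_{(t-s_k)^-} - X_{(t-s_{k-1})^-} = -\bigl(X_{t-s_{k-1}} - X_{t-s_k}\bigr) \quad \text{a.s.}
\]

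Because $X$ has independent increments, the family $\{X_{s_k} - X_{s_{k-1}}\}_{k=1}^{n}$ consists of independent random variables, as does $\{X_{t-s_{k-1}} - X_{t-s_k}\}_{k=1}^{n}$, the latter being indexed over the pairwise disjoint intervals $(t-s_k, t-s_{k-1}]$. By stationarity, $X_{t-s_{k-1}} - X_{t-s_k}$ has the same law as $X_{s_k} - X_{s_{k-1}}$, since both intervals have length $s_k - s_{k-1}$. Hence the vector of reversed increments and the vector of negated dual increments share the same joint law; and since the passage from increments to the values $(Y_{s_1}, \dots, Y_{s_n})$, respectively $(Z_{s_1}, \dots, Z_{s_n})$, is the same invertible linear map (partial summation, starting from $0$) in both cases, we obtain $(Y_{s_1}, \dots, Y_{s_n}) \stackrel{d}{=} (Z_{s_1}, \dots, Z_{s_n})$.

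The step I expect to require the most care is the passage from finite-dimensional distributions back to the level of paths, precisely because time reversal turns a c\`adl\`ag trajectory into a c\`agl\`ad one, and the left-limit regularization $X_{(t-s)^-}$ is exactly what restores right-continuity. Concretely, one must verify that $s \mapsto X_{(t-s)^-}$ is indeed c\`adl\`ag, and that matching the finite-dimensional laws along the dense set of continuity points (the complement of the a.s.-countable jump set) suffices to pin down the law of $Y$ as a process; this is where the absence of fixed discontinuities and the a.s. countability of the jump times do the real work. Granting this regularity bookkeeping, the equality of finite-dimensional distributions established above yields the proposition.
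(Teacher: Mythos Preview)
The paper does not supply its own proof of this proposition; it simply quotes the result from Chapter~III of Bertoin~\cite{Bertoin96:book}. Your argument is correct and is essentially the standard textbook proof found there: match finite-dimensional distributions by exploiting stationary independent increments (using that fixed times are a.s.\ continuity points so that left limits may be replaced by values), and then pass to equality in law on path space via the c\`adl\`ag regularity of the left-limit reversal.
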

\begin{prop} \label{prop:inproof:3}
For every $x,y\in\mathbb{R}$, the law of reversed process $(X_{(t-s)-}, 0\leq s\leq t)$ under $\Pm_{x}(\cdot|X(t)=y)$ is a version of the conditional law of $(X_s, 0\leq s\leq t)$ under $\widehat{\Pm}_{y}(\cdot|X_t=x)$.
\end{prop}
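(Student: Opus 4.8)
The plan is to reduce the statement to an identity between finite-dimensional distributions and to flip the forward chain into the dual one by repeated use of the semigroup duality in Proposition \ref{prop:inproof:1}. Since the law of a c\`adl\`ag process is determined by its finite-dimensional distributions together with right-continuity, and since an SNLP has no fixed-time discontinuities (so that $X_{(t-s)-}=X_{(t-s)}$ almost surely for each fixed $s$), it suffices to compare, for arbitrary $0<s_1<\cdots<s_{n-1}<t$, the conditional joint law of $(X_{(t-s_1)-},\dots,X_{(t-s_{n-1})-})$ under $\Pm_{x}(\cdot\mid X_t=y)$ with that of $(X_{s_1},\dots,X_{s_{n-1}})$ under $\widehat{\Pm}_{y}(\cdot\mid X_t=x)$.

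First I would recast Proposition \ref{prop:inproof:1} as an identity of measures on $\mathbb{R}^2$. Writing $p_t(x,dz)=\Pm_x(X_t\in dz)$ and $\widehat{p}_t(z,dx)=\widehat{\Pm}_z(X_t\in dx)$ for the forward and dual transition kernels, testing the duality relation against product functions $f\otimes g$ and invoking a monotone-class argument yields
\[
p_t(x,dz)\,dx=\widehat{p}_t(z,dx)\,dz\qquad\text{on }\mathbb{R}^2 .
\]
This is the algebraic engine of the reversal: it lets me replace each forward transition kernel by a dual one at the cost of exchanging the roles of its two spatial arguments.

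Next I would expand the (unnormalized) forward finite-dimensional law. Setting $u_i=t-s_{n-i}$, so that $0=u_0<u_1<\cdots<u_{n-1}<u_n=t$ and $z_0=x$, the Markov property together with Chapman--Kolmogorov chaining gives, with $z_n=y$,
\[
\Pm_x\!\left(X_{u_1}\in dz_1,\dots,X_{u_{n-1}}\in dz_{n-1},X_t\in dy\right)
=\prod_{i=1}^{n} p_{u_i-u_{i-1}}(z_{i-1},dz_i).
\]
Seeding the computation with an auxiliary Lebesgue factor $dz_0=dx$ and applying the measure identity above to successive pairs, each $dz_{i-1}\,p_{u_i-u_{i-1}}(z_{i-1},dz_i)$ becomes $\widehat{p}_{u_i-u_{i-1}}(z_i,dz_{i-1})\,dz_i$; reading the resulting product in the reverse order $z_{n-1},z_{n-2},\dots,z_0$ one recognizes exactly the dual finite-dimensional law started from $y$, the reversed increments $u_n-u_{n-1}=s_1$ and $u_{n-j+1}-u_{n-j}=s_j-s_{j-1}$ matching the time mesh of $(X_{s_1},\dots,X_{s_{n-1}})$, while $X_{(t-s_i)-}=X_{u_{n-i}}$ matches the spatial variables. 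Dividing by the common normalizer $p_t(x,dy)=\widehat{p}_t(y,dx)$ then identifies the two conditional laws; as the times and the bounded test functions were arbitrary, the two regular conditional distributions agree.

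The main obstacle is measure-theoretic rather than algebraic: the transition kernels need not be absolutely continuous, so the conditioning on $\{X_t=y\}$ must be realized through regular conditional distributions and the identity can only be asserted for Lebesgue-almost-every endpoint, which is precisely what the phrase \emph{``is a version of''} accommodates. I would therefore carry out the whole computation as an equality of the two unnormalized kernels fibered over the endpoint, disintegrate with respect to the common marginal of $X_t$ supplied by the measure identity, and only at the very end pass to the normalized conditional laws, recording the null-set ambiguity. A secondary point deserving care is the left limit $X_{(t-s)-}$: the reduction to point values for each fixed $s$ relies on the absence of fixed discontinuities, whereas the c\`adl\`ag regularity of the reversed path as a whole is the content of Proposition \ref{prop:inproof:2}.
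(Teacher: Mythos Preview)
The paper does not prove this proposition at all; it merely records it as a fact that ``can be found from Chapter III in \cite{Bertoin96:book}'' alongside Propositions \ref{prop:inproof:1} and \ref{prop:inproof:2}. So there is no proof in the paper to compare against.

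Your argument is correct and is in fact the standard route to such time-reversal statements: reduce to finite-dimensional distributions, rewrite the semigroup duality of Proposition \ref{prop:inproof:1} as the measure identity $p_t(x,dz)\,dx=\widehat{p}_t(z,dx)\,dz$, and then telescope through the Chapman--Kolmogorov chain to flip each transition kernel. Your bookkeeping of the time mesh ($u_{n-j+1}-u_{n-j}=s_j-s_{j-1}$) and the identification $X_{(t-s_i)-}=X_{u_{n-i}}$ are right, and your remarks on the measure-theoretic subtleties---working with unnormalized kernels, disintegrating only at the end, and interpreting ``a version of'' as equality for Lebesgue-a.e.\ endpoint---are exactly the correct caveats. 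The appeal to absence of fixed-time discontinuities to replace $X_{(t-s)-}$ by $X_{t-s}$ at each fixed $s$ is also the right justification. In short, you have supplied a complete proof where the paper only gives a citation; what you wrote is essentially what one finds in Bertoin.
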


Since the dual argument holds for every  $t>0$,  
the Propositions remain valid with $t$ replaced by an independent $e_{\lambda}$.
Thus, we denote by $\widetilde{X}=(X_{(e_{\lambda}-t)-}, 0<t<e_{\lambda})$ the process reversed at $e_{\lambda}$ instead of $t$ in the following proofs.
We also introduce the notation 
\begin{equation}
\widetilde{L}(s):=\int_0^{s}\omega(\widetilde{X}(r)) dr=\int_{0}^{s} \omega(X(e_{\lambda}-r)) dr=L(e_{\lambda})-L(e_{\lambda}-s),
\end{equation}
for $0<s\leq e_{\lambda}$ for simplicity.
We are now ready to prove our main results. 
\begin{proof}[Proof of Theorem \ref{main:s+}] 
As is often the case, we always focus on the integrals with respect to arbitrary nonnegative measurable functions $f, g$  on $\mathbb{R}$.

Firstly, we claim that for $z\in\mathbb{R}$
\begin{equation}
 \int_{0}^{\infty} e^{-\lambda t} \widehat{\Em}_{z} \left(e^{-L(t)} f(X(t)); t\leq \tdc \right) \,dt= \int_{\mathbb{R}} f(x) u^{(p+\lambda, q+\lambda)}_{(a,b)}(x,z)\,dx. \label{eqn:inproof:dual}
\end{equation}
where $u^{(p,q)}_{(a,b)}(x,y)$ is the resolvent density defined in \eqref{eqn:resol:denstity}.
Observing that $L(e_{\lambda})=\widetilde{L}(e_{\lambda})$, we have by 
applying Proposition \ref{prop:inproof:1} and \ref{prop:inproof:3}  
\begin{align*}
& \int_{\mathbb{R}} f(x) \Em_{x}\left(e^{- L(e_{\lambda})} g(X(e_{\lambda})); e_{\lambda}\leq \tdc\right)\,dx\\
=&\ \iint f(x) g(z) \Em_{x}\left(e^{-L(e_{\lambda})}; e_{\lambda}\leq \tdc\middle|X_{e_{\lambda}}=z\right) \Pm_{x}(X_{e_{\lambda}}\in\,dz)\,dx\\
=&\ \iint f(x) g(z) \Em_{x}\left(e^{-\widetilde{L}(e_{\lambda})}; e_{\lambda}\leq \widetilde{\tau}_{c}^{+}\wedge \widetilde{\tau}_{0}^{-}\middle|X_{e_{\lambda}}=z\right) \Pm_{x}(X_{e_{\lambda}}\in\,dz)\,dx\\
=&\ \iint f(x) g(z) \widehat{\Em}_{z}\left(e^{-L(e_{\lambda})}; e_{\lambda}\leq \tdc\middle|X_{e_{\lambda}}=x\right) \widehat{\Pm}_{z}(X_{e_{\lambda}}\in\,dx)\,dz\\
=&\  \int_{\mathbb{R}} g(z) \widehat{\Em}_{z} \left(e^{-L(e_{\lambda})} f(X(e_{\lambda})); t\leq \tdc \right) \,dz.
\end{align*}
Equation \eqref{eqn:inproof:dual} is proved by applying Theorem \ref{main:resol}.

On the set $\{\sxp>0\}=\{\tau_{0}^{+}<e_{\lambda}\}=\{\widetilde{\tau}_{0}^{+}<e_{\lambda}\}$, we have
\[
\sxp +\widetilde{\tau}_{0}^{+}=e_{\lambda},\quad
X(\sxp-)=\widetilde{X}(\widetilde{\tau}_{0}^{+}),\quad 
L(\sxp)=\widetilde{L}(e_{\lambda})- \widetilde{L}(\widetilde{\tau}_{0}^{+}).
\]
Moreover $\{0<\sxp \leq \tdc\}=\{ (\widetilde{\tau}_{d}^{+}\wedge \widetilde{\tau}_{c}^{-})\circ\theta_{\widetilde{\tau}_{0}^{+}}\geq e_{\lambda}-\widetilde{\tau}_{0}^{+}>0\}$
by definitions,  where $\theta_{\cdot}$ is the shifting operator. The event on the righthand side means that after $\widetilde{\tau}_{0}^{+}$, $\widetilde{X}$ doesn't exit $[c,d]$ before $e_{\lambda}$. 
Therefore, we have for $z\in\mathbb{R}$
\begin{align*}
&\  \Em_{x}\left[e^{-L(\sxp)} g(X(\sxp-)); 0< \sxp\leq \tdc\middle|X_{e_{\lambda}}=z\right]\\
=&\ \Em_{x}\left[e^{\widetilde{L}(\widetilde{\tau}_{0}^{+})-\widetilde{L}(e_{\lambda})} g(\widetilde{X}(\widetilde{\tau}_{0}^{+}));  
(\widetilde{\tau}_{d}^{+}\wedge \widetilde{\tau}_{c}^{-})\circ\theta_{\widetilde{\tau}_{0}^{+}}\geq e_{\lambda}-\widetilde{\tau}_{0}^{+}>0\middle|X_{e_{\lambda}}=z\right]\\
=&\ \widehat{\Em}_{z} \left[e^{L(\tau_{0}^{+})-L(e_{\lambda})} g(X(\tau_{0}^{+})); 
(\tdc)\circ\theta_{\tau_{0}^{+}}\geq e_{\lambda}-\tau_{0}^{+}>0
\middle|X_{e_{\lambda}}=x\right],
\end{align*}
where Proposition \ref{prop:inproof:3} is applied in the last line. Therefore employing Proposition \ref{prop:inproof:1}, we have
\begin{align*}
&\ \int f(x) \Em_{x}\left[e^{-L(\sxp)} g(X(\sxp-)); 0< \sxp\leq \tdc\right] dx\\
=&\ \iint f(x) \Em_{x}\left[e^{-L(\sxp)} g(X(\sxp-)); 0< \sxp\leq \tdc\middle|X_{e_{\lambda}}=z\right]\Pm_{x}(X_{e_{\lambda}}\in dz) dx\\
=&\ \iint  f(x) \widehat{\Em}_{z} \left[e^{L(\tau_{0}^{+})-L(e_{\lambda})} g(X_{\tau_{0}^{+}}); 
(\tdc)\circ\theta_{\tau_{0}^{+}}\geq e_{\lambda}-\tau_{0}^{+}>0
\middle|X_{e_{\lambda}}=x\right] \widehat{\Pm}_{z}(X_{e_{\lambda}}\in dx) dz\\
=&\ \int \widehat{\Em}_{z} \left[g(X(\tau_{0}^{+})) f(X_{e_{\lambda}}) e^{-L(e_{\lambda}-\tau_{0}^{+})\circ \theta_{\tau_{0}^{+}}}; 
(\tdc)\circ\theta_{\tau_{0}^{+}}\geq e_{\lambda}-\tau_{0}^{+}>0 \right]  dz\\
=&\ \int \widehat{\Em}_{z}\left[g(X(\tau_{0}^{+}))\cdot \widehat{\Em}_{X(\tau_{0}^{+})} \left(e^{-L(e_{\lambda})}  f(X(e_{\lambda})) ; e_{\lambda}\leq \tdc\right); \tau_{0}^{+}<e_{\lambda}\right] dz,
\end{align*}
where the Markov property of $X$ and the memoryless property of $e_{\lambda}$ is used.

Finally, taking advantage of equation \eqref{eqn:inproof:dual}
\begin{align*}
&\ \int  f(x) \Em_{x}\left[e^{-L(\sxp)} g(X(\sxp-)); 0<\sxp\leq \tdc\right] dx\\
=&\ \lambda \int \Em_{z}\left(e^{-\lambda \tau_{0}^{-}}; -X(\tau_{0}^{-})\in\,dy\right)\cdot g(y) \cdot \left(\int f(x)  \uab(x,y)  dx\right)\,dz \\
=&\  \int g(y) \mu^{(\lambda)}(dy)  \int f(x)  \uab(x,y)  dx,
\end{align*}
where $\mu^{(\lambda)}$ is the measure in Lemma \ref{lem:mu}. Thus,
\[
 \Em_{x}\left[e^{-L(\sxp)} g(X(\sxp-)); 0<\sxp\leq \tdc\right]=\int g(y) \uab(x,y), \mu^{(\lambda)}(dy),
\]
and this completes the proof of Theorem \ref{main:s+}.
\end{proof}

Similar approach could be adopted to derive formulas in Theorem \ref{main:s-} and \ref{main:s=}. 
In light of Remark \ref{rmk:2}, we focus more on the event of creeping in the following proofs.

\begin{proof}[Proof of Theorem \ref{main:s-}] 
Noting that being exclusive of positive jumps, $X$ is continuous at $\sxm$. $\{\sxm=e_{\lambda}\}=\{X(e_{\lambda})<0\}$ by definition, then we have  for $y<0$,
\begin{align*}
&\ \Em_{x}\left[e^{-L(\sxm)}; X(\sxm)\in dy, 0<\sxm\leq \tdc\right]\\
=&\ \Em_{x}\left[e^{-L(e_{\lambda})}; X(e_{\lambda})\in dy,  e_{\lambda}\leq \tdc\right]\\
=&\ \lambda \int_{0}^{\infty} e^{-\lambda t} \Em_{x} \left[e^{-L(t)}; X_{t}\in dy, t\leq \tdc\right]= \lambda\cdot \uab(x,y) dy.
\end{align*}

Furthermore, on the set $\{0<\sxm<e_{\lambda}\}=\{X(\sxm)=0\}=\{0<\widetilde{\tau}_{0}^{-}<e_{\lambda}\}$, we have 
\[
\sxm+\widetilde{\tau}_{0}^{-}=e_{\lambda}\quad \text{and}\quad L(\sxm)=\widetilde{L}(e_{\lambda})-\widetilde{L}(\widetilde{\tau}_{0}^{-}).\]
Making use of the same argument as in the previous proof, we will have 
\begin{align*}
&\ \int_{\mathbb{R}} f(x) \Em_{x} \left[e^{-L(\sxm)}; X(\sxm)=0, 0<\sxm\leq \tdc\right] dx\\
=&\ \int \widehat{\Em}_{z}\left[ e^{-L(e_{\lambda})- L(\tau_{0}^{-})} f(X(e_{\lambda})); 
0<e_{\lambda}-\tau_{0}^{-}\leq (\tdc)\circ \theta_{\tau_{0}^{-}}\right]\,dz\\
=&\ \int \left(\widehat{\Pm}_{z}(\tau_{0}^{-}<e_{\lambda}) \cdot \widehat{\Em}\left(e^{-L(e_{\lambda})} f(X(e_{\lambda})); e_{\lambda}\leq \tdc\right) \right)dz \\
=&\  \lambda \int_{z>0} \widehat{\Em}_{z}\left(e^{-\lambda \tau_{0}^{-}} \right) dz  \times \int_{0}^{\infty}e^{-\lambda t}  \widehat{\Em}\left(  e^{-L(t)}f(X_{t}); t\leq \tdc\right) dt,
\end{align*}
where  the Markov property of $X$, memoryless property of $e_{\lambda}$, 
${X}(\tau_{0}^{-})=0$ under $\widehat{\Pm}_{z}$ for $z>0$ 
and $\{\tau_{0}^{-}>0\}=\{X_{0}>0\}$ 
are applied in the last two lines.

Considering equation \eqref{eqn:inproof:dual} we further have
\begin{align*}
&\ \int f(x) \Em_{x} \left[e^{-L(\sxm)}; X(\sxm)=0, 0<\sxm\leq \tdc\right] dx\\
=&\ \lambda \int_{z>0} e^{-\phi(\lambda)z}  dz\int f(x) \uab(x,0) dx=\frac{\lambda}{\phi(\lambda)} \int f(x) \uab(x,0) dx,
\end{align*}
which gives formula \eqref{eqn:main:s-:=}, and this finishes the proof.
\end{proof}

\begin{proof}[Proof of Theorem \ref{main:s=}]  Again, according to the dual argument, we will have
\begin{align*}
&\ \int_{\mathbb{R}} f(x) \Em_{x} \left[e^{-L(\sxe)}; 0<\sxe\leq \tdc\right]  dx\\
=&\  \int_{\mathbb{R}} \widehat{\Em}_{z}\left[e^{-\lambda \tau^{\{0\}}} 
\int_{0}^{\infty} \widehat{\Em}\left(e^{-L(e_{\lambda})} f(X(e_{\lambda})); e_{\lambda}\leq \tdc\right)\right] dz\\
=&\  \left(\int_{\mathbb{R}} f(x) \uab(x,0) dx \right)
\left(\int_{\mathbb{R}} \Em_{z}(e^{-\lambda \tau^{\{0\}}}) dz\right).
\end{align*}
Since for $y\geq0$, $\Em_{-y}[e^{-\lambda \tau_{0}^{+}}]=e^{-\phi(\lambda)y}$, we have from Lemma \ref{lem:mu} that 
\[
\lambda \int_{\mathbb{R}} \Em_{z}(e^{-\lambda \tau^{\{0\}}}) dz=\int_{0}^{\infty} e^{-\phi(\lambda) y} \mu^{(\lambda)}(dy)
=\lim_{s\to\phi(\lambda)}\frac{\phi(\lambda)(\psi(s)- \lambda)}{s(s-\phi(\lambda))}=\psi'(\phi(\lambda)).
\]
Putting them together gives
\[
\Em_{x} \left[e^{-L(\sxe)}; 0<\sxe\leq \tdc\right]= \frac{1}{\phi'(\lambda)} \uab(x,0),
\]
which proves the desired result.
\end{proof}

\begin{proof}[Proof of Corollary \ref{cor:4}] Basically,  the joint distributions of 
$(L(\cdot),X(e_{\lambda}))$ are direct consequences of the dual argument, by \eqref{eqn:inproof:dual} and Proposition \ref{prop:levy}.

For the differences of the last times, on the set $\{0<\sxe<\sxp<e_{\lambda}\}$, we have
\[
\{0<\sxe<\sxp<e_{\lambda}\}=\{\widetilde{\tau}_{0}^{+}<e_{\lambda}\}\cap \{\widetilde{\tau}_{0}^{-}<e_{\lambda}-\widetilde{\tau}_{0}^{+}\leq \widetilde{\tau}_{d}^{+}\wedge\widetilde{\tau}_{c}^{-}\}\circ\theta_{\widetilde{\tau}_{0}^{+}},
\]
\[
\sxp+ \widetilde{\tau}_{0}^{+}=e_{\lambda}, 
\quad \sxe+ \widetilde{\tau}^{\{0\}}=e_{\lambda} 
\quad\text{and}\quad 
L(\sxp)-L(\sxe)= L(\widetilde{\tau}_{0}^{-})\circ \theta_{\widetilde{\tau}_{0}^{+}}.
\]
The dual arguments suggest that, for $x,y,z>0$ with $y\neq x$,
\begin{align*}
&\ \int f(u) \Em_{u}\left[e^{L(\sxe)-L(\sxp)} \bo_{\{0<\sxe<\sxp\leq \tdc\}}; 
X_{\sxp-}\in dz, -X_{\sxp}\in dy, -X_{e_{\lambda}}\in dx\right] du\\
& = \widehat{\Em}_{-x}\left[ f(X_{e_{\lambda}}) 
\left(e^{-L(\tau_{0}^{-})} \bo_{\{\tau_{0}^{-}<e_{\lambda}-\tau_{0}^{+}\leq \tdc\}}\right)\circ\theta_{\tau_{0}^{+}}; -X_{\tau_{0}^{+}-}\in dy, X_{\tau_{0}^{+}}\in dz, 
\tau_{0}^{+}<e_{\lambda}\right] dx.
\end{align*}
Taking account of the Markov property of $X$ and the memoryless property of $e_{\lambda}$, it equals to
\begin{align*}
&\ \widehat{\Pm}_{-x}(-X_{\tau_{0}^{+}-}\in\,dy, X_{\tau_{0}^{+}}\in dz, \tau_{0}^{+}<e_{\lambda})\cdot \widehat{\Em}_{z}\left[e^{-L(\tau_{0}^{-})}f(X_{e_{\lambda}}); 
\tau_{0}^{-}<e_{\lambda}\leq \tdc \right]\,dx\\
=&\ \Em_{x}\left(e^{-\lambda \tau_{0}^{-}}; X_{\tau_{0}^{-}-}\in\,dy, -X_{\tau_{0}^{-}}\in dz\right)
\cdot \widehat{\Em}_{z}\left[e^{-L(\tau_{0}^{-})}; \tau_{0}^{-}<e_{\lambda}\wedge \tdc \right]\\
&\ \quad \times \widehat{\Em}\left(f(X_{e_{\lambda}}); e_{\lambda}\leq \tdc\right)\,dx\\
=&\ R^{(\lambda)}(x,dy)\cdot \Pi(dz+y) \cdot 
\widehat{\Em}_{z}\left[e^{-L(\tau_{0}^{-})}; \tau_{0}^{-}<e_{\lambda}\wedge \tau_{d}^{+} \right]\cdot \left(\lambda \int_{\mathbb{R}} u^{(\lambda)}(u,0) f(u)\,du\right)\,dx,
\end{align*}
where 
$R^{(\lambda)}(x,dy)$ is the resolvent measure as mentioned in Proposition \ref{prop:levy} and 
$u^{(\lambda)}(x,y)=\uab(x,y)$ with $p=q=0$ from identity \eqref{eqn:inproof:dual}.
Since $\D \widehat{\Em}_{z}\left[e^{-L(\tau_{0}^{-})}; \tau_{0}^{-}<e_{\lambda}\wedge \tau_{d}^{+}\right]=\frac{\mathcal{W}_{(a,b)}^{(p+\lambda,q+\lambda)}(d,z)}{\mathcal{W}_{(a,b)}^{(p+\lambda,q+\lambda)}(d,0)}$ as one can check. Coronary \ref{cor:4} is thus proved.
\end{proof}

\section{Conclusions}
Last passage times are as important as first passage times in studying Markov processes and can also find their applications in the risk theory. 
In this paper, 
the occupation times of intervals until last passage times for an SNLP are investigated. 
By applying the dual argument, we obtain the explicit formulas for their Laplace transforms.
The employed method also helps us to provide a characterisation of other features at the moment of last passage times.

\section*{Acknowledgement}
We are grateful for Droctor Weihong Ni at Liverpool University for her fruitful comments and suggestions, which helped improve the presentation. 
Both authors acknowledge financial supports from the National Natural Science Foundation of China (Grant No. 11501304 and No. 11601243).


\end{document}